\author{V. Beresnevich}
\address[V. Beresnevich]{Department of Mathematics, University of York, Heslington, York, YO10
5DD, United Kingdom}
\email{victor.beresnevich@york.ac.uk}
\author{J. Levesley}
\address[J. Levesley]{Department of Mathematics, University of York, Heslington, York, YO10
5DD, United Kingdom}
\email{jason.levesley@york.ac.uk}
\author{B. Ward}
\address[B. Ward]{Department of Mathematics, University of York, Heslington, York, YO10
5DD, United Kingdom}
\email{bw744@york.ac.uk}
\newtheorem{theorem}{Theorem}
\newtheorem{lemma}[theorem]{Lemma}
\newtheorem{corollary}[theorem]{Corollary}
\newtheorem{remark}[theorem]{Remark}
\newcommand{\ha}{\mathcal{H}}
\newcommand{ \s}{\mathcal{S}}
\newcommand{\Q}{\mathbb{Q}}
\newcommand{\U}{\mathcal{U}}
\newcommand{\R}{\mathbb{R}}
\newcommand{\N}{\mathbb{N}}
\newcommand{\Z}{\mathbb{Z}}
\newcommand{\bx}{\boldsymbol{x}}
\newcommand{\bt}{\boldsymbol{\tau}}
\newcommand{\tb}{\tilde{\tau}}
\newcommand{\M}{\mathcal{M}}
\newcommand{\q}{\mathcal{Q}}
\newcommand{\bp}{\boldsymbol{p}}
\newcommand{\ba}{\boldsymbol{a}}
\newcommand{\m}{\mathcal{M}}
\newcommand{\w}{\mathcal{W}}
\newcommand{\by}{\mathbf{y}}
\title[Weighted Approximable points over Manifolds]{A Lower Bound for the Hausdorff Dimension of the set of Weighted Simultaneously Approximable points over Manifolds}
\newtheorem*{theorem*}{Theorem}
\begin{document}

\date{October, 2020}
\begin{abstract}
Given a weight vector $\bt=(\tau_{1}, \dots, \tau_{n}) \in \R^{n}_{+}$ with each $\tau_{i}$ bounded by certain constraints, we obtain a lower bound for the Hausdorff dimension of the set $\w_{n}(\bt) \cap \m$, where $\m$ is a twice continuously differentiable manifold. From this we produce a lower bound for $\w_{n}(\Psi) \cap \m$ where $\Psi$ is a general approximation function with certain limits. The proof is based on a technique developed by Beresnevich et al. in \cite{BLVV17}, but we use an alternative mass transference style theorem proven by Wang, Wu and Xu \cite{WWX15} to obtain our lower bound.
\end{abstract}
\maketitle

\section{Introduction}

Diophantine approximation over $\mathbb{R}$ is largely complete with regards to the Lebesgue measure and Hausdorff measure when considering monotonic approximation functions. In higher dimensions there are two branches of Diophantine approximation studied in depth; dual and simultaneous. In the simultaneous setting we consider the set of numbers
\begin{equation*}
\s_{n}( \psi ) := \left\{ \bx \in \mathbb{R}^{n} : \max_{1 \leq i \leq n} |qx_{i}-p_{i}|< \psi(q) \text{   for i.m. } (\bp,q) \in \mathbb{Z}^{n} \times \mathbb{N} \right\},
\end{equation*}
where $\psi : \mathbb{N} \rightarrow \mathbb{R}_{+}$ for $\R_{+}=\{a \in \R: a \geq 0\}$ is an approximation function, with $\psi(q) \to 0$ as $q \to \infty$. Many measure results for this set have already been established. For example, where $\psi$ is of the form $\psi(q)=q^{-\tau}$ with $\tau  \leq 1/n$ then Dirichlet's theorem states that there are infinitely many $(\bp,q) \in \mathbb{Z}^{n} \times \mathbb{N}$ for any $\bx \in \mathbb{R}^{n}$ i.e. $\s_{n}( \tau) = \mathbb{R}^{n}$. Further, where $\tau > 1/n$ we can deduce via the Borel-Cantelli Lemma \cite{B09}, \cite{C17} that such set is of Lebesgue measure zero. A useful definition when stating metric results is the notion of \textit{extremality} \cite{S79}. For a point $\bx \in \R^{n}$, if the exponent of the approximation function $q^{-1/n}$ is the largest possible, i.e. for any $\tau>1/n$ we have that $\bx \not \in \s_{n}(\tau)$, then $\bx$ is said to be extremal. In the case that there does exist $\tau>1/n$ such that $\bx \in \s_{n}(\tau)$, then we say $\bx$ is very well approximable. \par 

 Using Hausdorff measure and Hausdorff dimension we can be more precise in distinguishing between two sets of zero Lebesgue measure.
For a set $X \subset \R^{n}$ and $\rho>0$, define a $\rho$-cover of $X$ as a sequence of balls $\{B_{i}\}$ such that $X \subset \bigcup_{i}B_{i}$, and for all balls $r(B_{i}) \leq \rho$, where $r(.)$ is the radius. Define a dimension function $f:\R_{+} \rightarrow \R_{+}$  as a continuous increasing function with $f(r) \rightarrow 0$ as $r \rightarrow 0$. Let
\begin{equation*}
\ha_{\rho}^{f}(X)=\inf \left\{ \sum_{i} f(r(B_{i})) : \{B_{i}\} \, \text{ is a } \, \rho-\text{cover of } X \right\},
\end{equation*}
where the infimum is take over all $\rho$-covers of $X$. Then define the $f$-Hausdorff measure by 
\begin{equation*}
\ha^{f}(X)= \lim_{\rho \rightarrow 0} \ha^{f}_{\rho}(X).
\end{equation*}
When the dimension function $f(x)=x^{s}$ for some $s \in \R_{+}$, we will denote $\ha^{f}$ as $\ha^{s}$. With this notation define the Hausdorff dimension as 
\begin{equation*}
\dim X = \inf \left\{ s>0: \ha^{s}(X)< \infty \right\} = \sup \left\{ s>0 : \ha^{s}(X)= \infty \right\},
\end{equation*}
where the latter is true if $X$ is an infinite set. The following theorem is a foundational result, proved independently by Jarnik \cite{J29} and Besicovitch \cite{B34}, on the Hausdorff dimension of the set of simultaneously approximable points.
\begin{theorem*} \label{jb}
Let $\tau \geq \frac{1}{n}$ then, 
\begin{equation*}
\dim \s_{n}(\tau) = \frac{n+1}{\tau+1}.
\end{equation*}
\end{theorem*}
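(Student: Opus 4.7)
The plan is to establish matching upper and lower bounds for $\dim\s_{n}(\tau)$. The boundary case $\tau=1/n$ is immediate: Dirichlet's theorem gives $\s_{n}(1/n)=\R^{n}$, and one checks $\frac{n+1}{\tau+1}=n$ at $\tau=1/n$, so the substantive work is the range $\tau>1/n$.

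For the upper bound $\dim\s_{n}(\tau)\leq\frac{n+1}{\tau+1}$, I would exploit $\Z^{n}$-periodicity of $\s_{n}(\tau)$ to restrict to $[0,1]^{n}$. For each $q\in\N$, the $q$-th ``level'' of the lim-sup set is covered by at most $(q+1)^{n}$ balls of radius $\asymp q^{-(\tau+1)}$ centred at the rationals $\bp/q$ with $0\leq p_{i}\leq q$. A direct Hausdorff--Cantelli computation then bounds $\ha^{s}(\s_{n}(\tau)\cap[0,1]^{n})$ by a constant multiple of the tail $\sum_{q\geq Q} q^{n-s(\tau+1)}$, which converges and tends to zero as $Q\to\infty$ whenever $s>\frac{n+1}{\tau+1}$. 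The upper bound follows.

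The hard direction is the lower bound, and my plan is to invoke the Mass Transference Principle (MTP) of Beresnevich--Velani. Dirichlet's theorem says that the lim-sup of balls $\tilde B_{q,\bp}$ of radius $r_{q}:=q^{-(n+1)/n}$ centred at $\bp/q$ has full Lebesgue measure (in fact, is all of $\R^{n}$). Setting $s=\frac{n+1}{\tau+1}\in(0,n]$, the MTP transfers this full-Lebesgue statement into a full $\ha^{s}$ statement for the lim-sup of the contracted balls with the same centres and radii $r_{q}^{n/s}$. A one-line calculation gives $r_{q}^{n/s}=q^{-(n+1)/s}=q^{-(\tau+1)}$, so those contracted balls are exactly the ones defining $\s_{n}(\tau)$. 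Hence $\ha^{s}(\s_{n}(\tau)\cap[0,1]^{n})$ equals $\ha^{s}([0,1]^{n})$, which is positive (indeed infinite since $s\leq n$), so $\dim\s_{n}(\tau)\geq s=\frac{n+1}{\tau+1}$.

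The main obstacle is clearly the lower bound: invoking the MTP is conceptually clean but relies on a deep theorem that I would cite as a black box. If one wished to avoid it, the historical route of Jarn\'ik and Besicovitch is to build an explicit Cantor-like subset of $\s_{n}(\tau)$ whose nodes are carefully spaced rationals $\bp/q$ at each scale, support a natural probability measure on it, and verify sharp local estimates good enough to apply the mass distribution principle. This is self-contained but technically demanding, and matching the exact exponent $\frac{n+1}{\tau+1}$ requires a delicate book-keeping of how many rationals of denominator $q$ one keeps at each stage; the MTP route sidesteps all of this bookkeeping.
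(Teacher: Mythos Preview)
Your argument is correct. The paper does not actually supply its own proof of this classical theorem; it is simply quoted as a foundational result of Jarn\'ik and Besicovitch. The closest the paper comes is the illustrative proof (in Section~2) of the lower bound in Rynne's Theorem~\ref{rynne} via the Wang--Wu--Xu mass transference from balls to rectangles (Theorem~\ref{wang}), which specialises to the Jarn\'ik--Besicovitch lower bound when all $\tau_i=\tau$.

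The one genuine difference worth noting: you invoke the original Beresnevich--Velani MTP (balls to balls, Theorem~\ref{thm:mtp} in the paper), whereas the paper's illustration uses the rectangle version (Theorem~\ref{wang}). In the equal-weight case these coincide, so neither route is materially harder; yours is arguably the more natural citation for the unweighted statement, and it delivers the slightly stronger conclusion $\ha^{s}(\s_{n}(\tau)\cap B)=\ha^{s}(B)$ rather than only a dimension bound. Your upper bound via the Hausdorff--Cantelli covering argument is standard and is not treated in the paper at all.
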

The result allows us to differentiate between the size of simultaneous $\tau$-approximable sets where $\tau \geq 1/n$. \par
 This form of simultaneous approximation can be thought of as the set of all $\bx \in \mathbb{R}^{n}$ that lie in infinitely many $n$-dimensional hypercubes with side length $2\psi(q)$ and centres $\mathbf{p}/q=(\frac{p_{1}}{q}, \dots, \frac{p_{n}}{q}) \in \mathbb{Q}^{n}$, where each $p_{i} \in \mathbb{Z}$. Weighted simultaneously approximable numbers can be considered in a very similar way, with the $n$-dimensional hypercubes being replaced by $n$-dimensional hyperrectangles. This set is defined as 
 \begin{equation*}
  \w_{n}(\Psi):= \left\{ \bx \in \mathbb{R}^{n} : \begin{array}{c}
  |qx_{i}-p_{i}|< \psi_{i}(q), \, \, 1 \leq i \leq n, \\
  \quad \text{for i.m } \, (\bp,q) \in \mathbb{Z}^{n} \times \mathbb{N}
  \end{array} \right\},
  \end{equation*}
where $\Psi: \mathbb{N} \rightarrow \mathbb{R}_{+}^{n}$, with $\Psi(q)=(\psi_{1}(q), \dots, \psi_{n}(q))$ and $\psi_{i}: \mathbb{N} \rightarrow \mathbb{R}_{+}$ for each $1 \leq i \leq n$. There are many measure theoretic results that have been established for $\w_{n}(\Psi)$. A Dirichlet style theorem for weighted simultaneous approximation can be deduced from Minkowski's linear forms Theorem (see Section 1.4.1 of \cite{BRV16}) to show that $\w_{n}(\bt)= \mathbb{R}^{n}$ for all vectors $\bt=(\tau_{1}, \dots , \tau_{n}) \in \mathbb{R}^{n}_{+}$ satisfying $\sum_{i=1}^{n} \tau_{i}  \leq 1$. Further, there is also a Khintchine style result \cite{K26} that states for $\sum_{i=1}^{n} \tau_{i} > 1$ the set has zero Lebesgue measure. The Hausdorff dimension result for $\w_{n}(\Psi)$ was determined by Rynne \cite{R98} and reads as follows.
 \begin{theorem} \label{rynne}
Let $\bt=(\tau_{1}, \dots, \tau_{n}) \in \R^{n}_{+}$ with $\tau_{1} \geq \dots \geq \tau_{n}$ and suppose that $\sum_{i=1}^{n} \tau_{i} \geq 1$. Then
\begin{equation*}
\dim \w_{n}(\bt) = \underset{1 \leq j \leq n}{\min}
\left\{ \frac{n+1+ \sum_{i=j}^{n}(\tau_{j} - \tau_{i})}{1+\tau_{j}}\right\}.
\end{equation*}
\end{theorem}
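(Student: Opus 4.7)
The plan is to prove Theorem \ref{rynne} by establishing matching upper and lower bounds on $\dim \w_n(\bt)$, each by a distinct technique. For the upper bound I would work directly from the natural cover given by the definition: $\w_n(\bt) \cap [0,1]^n$ is contained, for any $Q \geq 1$, in the union over $q \geq Q$ and $\bp \in \Z^n$ with $0 \leq p_i \leq q$ of the hyperrectangles $R_q(\bp) = \prod_{i=1}^n [p_i/q - q^{-(1+\tau_i)}, p_i/q + q^{-(1+\tau_i)}]$. For each fixed $j \in \{1,\ldots,n\}$, I would cover every such rectangle by balls of radius $r_q = q^{-(1+\tau_j)}$; because of the ordering $\tau_1 \geq \cdots \geq \tau_n$, the sides with $i < j$ fit inside a single ball, while the sides with $i > j$ each require $\asymp q^{\tau_j - \tau_i}$ balls. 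Summing $r_q^s$ over the $\asymp q^{\sum_{i=j}^n (\tau_j - \tau_i)}$ balls per rectangle, over the $\asymp q^n$ rational centres, and over $q \geq Q$ yields the tail $\sum_{q \geq Q} q^{n + \sum_{i=j}^n (\tau_j - \tau_i) - (1+\tau_j) s}$, which converges precisely when $s > (n+1+\sum_{i=j}^n (\tau_j - \tau_i))/(1+\tau_j)$; taking $Q \to \infty$ then forces $\ha^s(\w_n(\bt)) = 0$, and minimising over $j$ gives the upper bound.

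For the lower bound the natural modern route is the mass transference principle from rectangles to rectangles due to Wang, Wu and Xu \cite{WWX15}. Choose an auxiliary weight vector $\boldsymbol{\alpha} = (\alpha_1, \ldots, \alpha_n)$ with $0 \leq \alpha_i \leq \tau_i$ and $\sum_i \alpha_i = 1$; by the Minkowski linear-forms theorem quoted just before Theorem \ref{rynne} one has $\w_n(\boldsymbol{\alpha}) = \R^n$, so the limsup of the fat rectangles with sides $2 q^{-(1+\alpha_i)}$ has full Lebesgue measure in $[0,1]^n$. The thin rectangles driving $\w_n(\bt)$ sit inside these, obtained by coordinate-wise contractions of factor $q^{\alpha_i - \tau_i} \leq 1$. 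The Wang--Wu--Xu theorem then converts the full-measure statement for the fat rectangles into a Hausdorff dimension lower bound for the limsup of the thin ones. By varying $\boldsymbol{\alpha}$ within the admissible simplex one can realise each of the $n$ candidate expressions $(n+1+\sum_{i=j}^n (\tau_j - \tau_i))/(1+\tau_j)$ as a valid lower bound, and hence the dimension is bounded below by their minimum, matching the value furnished by the upper bound.

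The main obstacle is the lower bound, specifically the passage from the multi-parameter Wang--Wu--Xu output to the single-index minimum appearing in Rynne's formula. The WWX principle yields a bound that is itself a minimum over coordinate subsets of an affine expression in the contraction exponents $\tau_i - \alpha_i$, and it is not obvious a priori that after optimising the free auxiliary vector $\boldsymbol{\alpha}$ one recovers exactly the index-$j$ expression in the theorem. The combinatorial heart of the argument is therefore to identify, for each $j$, the right auxiliary weights and the right coordinate partition that collapse the WWX bound to the $j$-th candidate, and to verify that the ordering hypothesis $\tau_1 \geq \cdots \geq \tau_n$ is used at precisely the points needed for this collapse to go through.
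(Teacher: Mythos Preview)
Your upper-bound covering argument is standard and correct; the paper does not reprove it, simply citing Rynne. The interesting comparison is on the lower bound.

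The paper's route is considerably simpler than yours. It introduces no auxiliary weight $\boldsymbol{\alpha}$: it starts from \emph{simultaneous} Dirichlet, so that the full-measure limsup set is built from genuine \emph{balls} $B(\bp/q,\,q^{-1-1/n})$, and then applies Theorem~\ref{wang}, which is a mass transference principle from \emph{balls} to rectangles. With the single choice $a_i = n(1+\tau_i)/(n+1)$ the output of Theorem~\ref{wang} is already
\[
\min_{1\le j\le n}\left\{\frac{n+1+\sum_{i=j}^n(\tau_j-\tau_i)}{1+\tau_j}\right\}
\]
after two lines of algebra; no optimisation over an auxiliary simplex is needed, and the ``combinatorial heart'' you flag as the main obstacle simply does not arise.

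Two specific slips in your proposal are worth noting. First, \cite{WWX15} is balls-to-rectangles, not rectangles-to-rectangles; the latter is the later Wang--Wu paper \cite{WW19}, and it carries a local ubiquity hypothesis that is not free (indeed the paper's conclusion explicitly remarks that Theorem~\ref{diristyle} is insufficient to verify it). Second, the sentence ``one can realise each of the $n$ candidate expressions as a valid lower bound, and hence the dimension is bounded below by their minimum'' is logically backwards: if each candidate were genuinely a lower bound you would conclude the dimension is at least their \emph{maximum}, contradicting the upper bound whenever the $\tau_i$ are not all equal. What you actually need---and what the paper obtains in one stroke---is a single application of the transference principle whose output already \emph{equals} the minimum.
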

It is worth noting that in \cite{R98} Rynne proved more than the above theorem. He obtained a dimension result for $\w_{n}(\bt)$ when the set of points were approximated by integers $q$ lying in any infinite subset $Q \subseteq \N$. \par
  In Rynne's paper \cite{R98}, the $\bt$ approximation function can be replaced with a monotonic decreasing $\Psi$ approximation function providing that the limit
  \begin{equation*}
  \tau^{*}_{i} = \lim _{q \rightarrow \infty} \frac{-\log \psi_{i}(q)}{\log q}
  \end{equation*}
  exists for each $1 \leq i \leq n$. We can do this by using the inequality
  \begin{equation*}
   q^{-\tau^{*}_{i} -\epsilon} \leq \psi_{i}(q) \leq q^{-\tau^{*}_{i}+\epsilon},
   \end{equation*}
   which is valid for sufficiently large $q \in \mathbb{N}$. Observe that, for $\bt^{*}=(\tau^{*}_{1}, \dots, \tau^{*}_{n})$, the corresponding sets then satisfy
   \begin{equation*}
  \w_{n}( \bt^{*}+ \mathbf{\epsilon}) \subset \w_{n}(\mathbf{\Psi}) \subset \w_{n}(\bt^{*}-\mathbf{\epsilon}),
   \end{equation*}
   where $\mathbf{\epsilon}=(\epsilon, \dots, \epsilon) \in \mathbb{R}^{n}$. Thus,
   \begin{equation*}
   \dim \w_{n}(\bt^{*}- \mathbf{\epsilon}) \leq  \dim \w_{n}(\Psi) \leq  \dim \w_{n}(\bt^{*}+ \mathbf{\epsilon}),
   \end{equation*}
   and as $\mathbf{\epsilon}$ is arbitrary we have that $\dim \w_{n}(\Psi)=\dim \w_{n}(\bt^{*})$. \par 
   The goal of this paper is to obtain a general lower bound on the dimension for weighted simultaneous  Diophantine approximation on manifolds. When considering manifolds we look at them locally on some open subset $\U \subset \R^{d}$ and use the following Monge parametrisation without loss of generality
\begin{equation*}
\m:= \left\{ ( \bx, f(\bx) ) : \bx \in \U  \right\} \subseteq \R^{n},
\end{equation*}
where $d$ is the dimension of the manifold, and $f$ is a map such that $f: \U \rightarrow \R^{m}$ with $m$ being the codimension of the manifold such that $d+m=n$. As the manifold is of this form we can consider the approximation of the coordinates $\bx$ and $f(\bx)$ separately. When doing this we will refer to $\bx$ as the independent variables, and the codomain of $f$ as the dependent variables. When considering the set of simultaneously approximable points on manifolds the approximation functions on both the independent and dependent variables are the same. \par 
Much progress has been made in establishing measure theoretic results for the set $\s_{n}(\psi) \cap \m$, we highlight some of these results below. The first research in this field involved extremal manifolds. Spindzhuk established many of the foundational results in this area which he referred to as Diophantine approximation on dependent variables \cite{S67}. A differentiable manifold is called extremal if almost all points, with respect to the induced Lebesgue measure of the manifold, are extremal, whereby we mean that the Dirichlet approximation of the space cannot be improved for almost all points. It was first conjectured \cite{S80} and later proven by Kleinbock and Margulis \cite{KM98} that any non-degenerate submanifold of $\mathbb{R}^{n}$ is extremal. One of the first advances with respect to the Hausdorff dimension of the set $\s_{2}(\psi) \cap \m$ was by Beresnevich, Dickinson, and Velani in \cite{BDV07}, where they determined the dimension of the set of simultaneously approximable points on sufficiently curved planar curves in $\mathbb{R}^{2}$. There is also a related paper \cite{BV07} which uses a similar technique to find the Hausdorff dimension of $\w_{2}(\bt) \cap \m$ for $\bt=(\tau_{1}, \tau_{2})$ bounded below and above by $0$ and $1$ respectively. Both papers give an equality for the dimension rather than just a lower bound as presented in this paper. The following is Theorem 4 from \cite{BV07}. We denote the set of $n$ times continuously differentiable functions by $C^{(n)}$.
   \begin{theorem}[Beresnevich et al. \cite{BV07}] \label{first}
   Let $f$ be a $C^{(3)}$ function over an interval $I_{0}\subset \R$, and let $\mathcal{C}_{f} := \{ (x,f(x)) : x \in I_{0} \}$. Let $\bt=(\tau_{1},\tau_{2})$, where $\tau_{1}$ and $\tau_{2}$ are positive numbers such that $0 < \min \{ \tau_{1}, \tau_{2} \} < 1$ and $\tau_{1} +\tau_{2} \geq 1$. Assume that 
   \begin{equation} \label{ineqdim}
   \dim \left\{ x \in I_{0} : f''(x)=0 \right\} \leq \frac{2-\min \{ \tau_{1}, \tau_{2} \}}{1 + \max \{ \tau_{1}, \tau_{2} \}}.
   \end{equation}
   Then
   \begin{equation*}
   \dim \w_{2}(\bt) \cap \mathcal{C}_{f} =  \frac{2-\min \{ \tau_{1}, \tau_{2} \}}{1 + \max \{ \tau_{1}, \tau_{2} \}}.
   \end{equation*}
   \end{theorem}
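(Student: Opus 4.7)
My plan is to establish matching upper and lower bounds for $\dim \w_{2}(\bt) \cap \Cf$. Assume without loss of generality that $\tau_{1} \geq \tau_{2}$ and set $s := \frac{2-\tau_{2}}{1+\tau_{1}}$. First I would use hypothesis \eqref{ineqdim}: since $\dim\{x \in I_{0} : f''(x) = 0\} \leq s$, countable stability of Hausdorff dimension lets me restrict attention to a subinterval $J \subset I_{0}$ on which $|f''| \geq c > 0$ for some constant $c$. The non-vanishing curvature on $J$ is the key structural input for both bounds.

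\textbf{Upper bound.} I would cover $\w_{2}(\bt) \cap \Cf|_{J}$ by the weighted rectangles $R_{q}(\bp) := \prod_{i=1,2}[p_{i}/q - q^{-1-\tau_{i}},\,p_{i}/q + q^{-1-\tau_{i}}]$. For admissible $\bp$ (those with $R_{q}(\bp) \cap \Cf|_{J} \neq \emptyset$), using $\tau_{1} \geq \tau_{2}$ and the boundedness of $f'$ on $J$, one obtains $|f(p_{1}/q) - p_{2}/q| \lesssim q^{-1-\tau_{2}}$, so counting admissible $\bp$ at denominator $q$ reduces to counting $p_{1}$ with $\|qf(p_{1}/q)\| \lesssim q^{-\tau_{2}}$. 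A curvature-based equidistribution / lattice-point estimate using $|f''| \geq c$ bounds this count by $\lesssim q^{1-\tau_{2}}$. Each admissible rectangle contributes a piece of $\Cf|_{J}$ of diameter $\lesssim q^{-1-\tau_{1}}$, constrained by the narrower $x$-window, so the $s'$-Hausdorff sum $\sum_{q} q^{1-\tau_{2}} (q^{-1-\tau_{1}})^{s'}$ converges for every $s' > s$, giving $\dim \leq s$ via the Hausdorff--Cantelli lemma.

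\textbf{Lower bound.} For the matching lower bound I would invoke the ubiquity plus Mass Transference Principle framework. A divergence-side counting of rationals close to $\Cf|_{J}$, again exploiting $|f''| \geq c$, should yield a ubiquitous system at the weighted scales $(Q^{-1-\tau_{1}}, Q^{-1-\tau_{2}})$: for almost every $x \in J$ and every large $Q$, one finds $(p_{1}/q, p_{2}/q)$ with $q \leq Q$ lying within those weighted distances of $(x, f(x))$. Feeding this into a Mass Transference Principle for rectangles, of the type developed by Beresnevich--Velani and later refined by Wang--Wu--Xu in the form used elsewhere in the present paper, transfers full Lebesgue measure of an appropriately inflated lim sup to $\ha^{s}(\w_{2}(\bt) \cap \Cf|_{J}) = \infty$, yielding $\dim \geq s$.

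\textbf{Main obstacle.} The hardest step will be setting up the ubiquitous system at the correct weighted scales: this requires a precise counting of rationals close to the curve, exploiting $|f''| \geq c$ uniformly in $q$, and then a careful application of the rectangular Mass Transference Principle to the one-dimensional set $\Cf|_{J}$ rather than to ambient $\R^{2}$. On the upper-bound side the curvature-based counting is classical but must be handled carefully to ensure that the discrepancy error does not overwhelm the main term, especially when $\tau_{2}$ is close to $1$, possibly requiring a dyadic regrouping of denominators. The reduction via \eqref{ineqdim} is routine but must be tracked to ensure that all estimates take place on intervals with quantitative curvature.
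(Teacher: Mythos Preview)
This theorem is not proved in the present paper at all; it is quoted from \cite{BV07} as background. The only information the paper gives about its proof is the one-sentence summary following the statement: the lower bound in \cite{BV07} is obtained via the ubiquitous systems machinery of \cite{BDV06}, and the upper bound via Huxley's estimate \cite{H96} on rational points near a $C^{(3)}$ curve together with the dimension hypothesis \eqref{ineqdim}.

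Your upper-bound sketch is exactly the Huxley route the paper points to: restrict to an interval where $|f''|\ge c>0$, count rationals in a $q^{-\tau_2}$-tube about the curve, and feed the count into a Hausdorff--Cantelli sum. That matches.

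Your lower-bound sketch, however, is not the \cite{BV07} argument. The original paper uses ubiquity directly (in the sense of \cite{BDV06}) to produce the divergence-case Hausdorff measure statement; there is no Mass Transference step, and certainly no Wang--Wu--Xu result (which appeared only in 2015). What you are proposing is essentially the strategy of the \emph{present} paper's main theorem (Theorem~\ref{final}): a Dirichlet-type full-measure $\limsup$ of balls in the independent variable, followed by Theorem~\ref{wang} to pass to weighted rectangles. That would indeed recover the lower bound of Theorem~\ref{first} as the $d=m=1$ instance of Theorem~\ref{final}, so your plan is viable --- but it is a different, later route than the one \cite{BV07} actually takes.

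One point to watch: your ``without loss of generality $\tau_1\ge\tau_2$'' is not free, since $\Cf$ is presented as the graph of $f$ over the first coordinate. Swapping the roles of $\tau_1$ and $\tau_2$ amounts to re-parametrising the curve as a graph over the second coordinate, which needs $f'\neq 0$ and the curvature condition for $f^{-1}$; the non-vanishing of $f''$ on $J$ gives this locally, but it should be said.
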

   
     Theorem 4 from \cite{BDV07} is the simultaneous case where $\tau_{1}=\tau_{2}$. The common approach in both papers is ubiquity, as established in \cite{BDV06}, to determine the lower bound. The upper bound is found through a combination of Huxley's estimate \cite{H96}, which gives an upper estimate on the number of rational points within a specified neighbourhood of the curve, and the property given by \eqref{ineqdim}. This result has been further improved by Beresnevich and Zorin \cite{BZ10} who showed that the dimension result holds for weakly non-degenerate curves (see Theorem 4 of \cite{BZ10}). In the $n$-dimensional setting Beresnevich et al. \cite{BLVV17} proved the following result.
     \begin{theorem}[Beresnevich et al. \cite{BLVV17}] \label{second} 
     Let $\mathcal{M}$ be any twice continuously differentiable submanifold of $\mathbb{R}^{n}$ of codimension $m$ and let 
     \begin{equation*}
     \frac{1}{n} \leq \tau <\frac{1}{m}.
     \end{equation*} 
     Then
     \begin{equation*}
     \dim \s_{n}(\tau) \cap \mathcal{M} \geq s := \frac{n+1}{\tau+1} -m.
     \end{equation*}
     Furthermore,
     \begin{equation*}
     \ha^{s} ( \s_{n}(\tau) \cap \mathcal{M})= \ha^{s}(\mathcal{M}).
     \end{equation*}
     \end{theorem}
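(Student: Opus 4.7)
The plan is to reduce the problem to a Diophantine question in the independent variables $\bx \in \U \subset \R^d$ via the Monge parametrization $\M = \{(\bx, f(\bx)) : \bx \in \U\}$. Since $(\bx, f(\bx)) \mapsto \bx$ is locally bi-Lipschitz, the Hausdorff $s$-measure of a set is preserved up to multiplicative constants (and the property of being $\ha^s$-null is preserved on the nose). It therefore suffices to bound from below the Hausdorff dimension of the set $\Omega \subset \U$ of those $\bx$ for which, for infinitely many $q \in \N$, there exist $\bp_0 \in \Z^d$ and $\bp \in \Z^m$ with $|q\bx - \bp_0|_\infty < q^{-\tau}$ and $|qf(\bx) - \bp|_\infty < q^{-\tau}$.

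First I would construct a family of resonant rationals near $\M$. For each $q \in \N$ and each $\bp_0 \in q\U \cap \Z^d$, let $\bp(\bp_0, q) \in \Z^m$ be the integer vector nearest to $qf(\bp_0/q)$, and call $\bp_0/q$ a resonant point of level $q$ whenever $|qf(\bp_0/q) - \bp(\bp_0, q)|_\infty < \tfrac{1}{2} q^{-\tau}$; let $R_q$ denote the collection of such rationals. A second-order Taylor expansion of $f$ around $\bp_0/q$, valid because $f \in C^2$, then shows that every $\bx$ with $|q\bx - \bp_0|_\infty < c\, q^{-\tau}$ (for some $c>0$ depending only on $\|Df\|_\infty$ on a compact neighbourhood) automatically satisfies $|qf(\bx) - \bp(\bp_0, q)|_\infty < q^{-\tau}$. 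Consequently
\begin{equation*}
\bigcap_{N \in \N} \bigcup_{q \geq N} \bigcup_{\bp_0/q \in R_q} B\bigl(\bp_0/q,\, c\, q^{-\tau-1}\bigr) \;\subset\; \Omega.
\end{equation*}

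Next I would prove that $\{R_q\}_{q \in \N}$ is locally ubiquitous in $\U$ at the scale $\rho(Q) \asymp Q^{-(d+1-m\tau)/d}$: for every ball $B \subset \U$ and every sufficiently large $Q$,
\begin{equation*}
\Bigl| B \,\cap\, \bigcup_{Q \leq q \leq 2Q} \bigcup_{\bp_0/q \in R_q} B\bigl(\bp_0/q,\, \rho(Q)\bigr) \Bigr| \;\geq\; \kappa\, |B|
\end{equation*}
for some $\kappa > 0$. The assumption $\tau \geq 1/n$ is equivalent to $(d+1-m\tau)/d \leq \tau+1$, so $\rho(Q) \geq Q^{-\tau-1}$ and one is in the regime where the Beresnevich--Dickinson--Velani ubiquity framework (or equivalently the Wang--Wu--Xu mass transference principle flagged in the abstract) applies. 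It upgrades the covering at scale $\rho(Q)$ to a Hausdorff measure statement at the finer scale $Q^{-\tau-1}$, giving
\begin{equation*}
\ha^s(\Omega) \;=\; \ha^s(\U), \qquad s \;=\; \frac{d+1-m\tau}{\tau+1} \;=\; \frac{n+1}{\tau+1} - m,
\end{equation*}
which, pushed forward through the Monge reduction, yields both conclusions of the theorem.

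The hard part will be the ubiquity estimate. There are of order $|B|\,Q^{d+1}$ rationals $\bp_0/q \in B$ with $q \in [Q,2Q]$, and one must show that a proportion $\gg q^{-m\tau}$ of them are resonant, i.e.\ that the residues $qf(\bp_0/q) \bmod \Z^m$ are sufficiently well distributed in $[0,1)^m$ down to scale $q^{-\tau}$. This demands an equidistribution or lattice-point counting argument that genuinely uses the $C^2$ regularity of $f$, and it is precisely the hypothesis $\tau < 1/m$ that keeps the exponent $(d+1-m\tau)/d$ positive and guarantees that the resonant points carry enough volume to cover a positive fraction of every ball.
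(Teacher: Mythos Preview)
Your overall skeleton---Monge reduction to the independent variables, building a family of ``resonant'' rationals $\bp_{0}/q$ for which $qf(\bp_{0}/q)$ is close to $\Z^{m}$, then transferring a full-measure covering to the finer scale $q^{-\tau-1}$ via a mass transference principle---is exactly the strategy the paper (following \cite{BLVV17}) uses. The Taylor/mean-value step showing that a small ball around a resonant $\bp_{0}/q$ lands in $\Omega$ is also correct, and your computation $s=\frac{d+1-m\tau}{\tau+1}=\frac{n+1}{\tau+1}-m$ and the equivalence $\tau\ge 1/n \iff \rho(Q)\ge Q^{-\tau-1}$ are fine.

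The genuine gap is in what you call the ``hard part''. You propose to obtain the covering/ubiquity input by \emph{counting}: arguing that among the $\asymp |B|\,Q^{d+1}$ rationals $\bp_{0}/q$ in $B$ with $q\in[Q,2Q]$, a proportion $\gg q^{-m\tau}$ are resonant, via equidistribution of the residues $qf(\bp_{0}/q)\bmod\Z^{m}$. No such equidistribution is available for an arbitrary $C^{(2)}$ map $f$; for affine $f(\bx)=A\bx+\alpha$ with Diophantine-bad $A$ the residues can be extremely poorly distributed, yet the theorem still applies. The paper (and \cite{BLVV17}) bypasses this entirely. Instead of counting resonant rationals, it proves a Dirichlet-type statement \emph{for every} $\bx\in\U$ directly via Minkowski's theorem for systems of linear forms: one introduces the tangent-linearized functions
\[
g_{j}(\bx)=f_{j}(\bx)-\sum_{i=1}^{d}x_{i}\,\frac{\partial f_{j}}{\partial x_{i}}(\bx),
\]
applies Minkowski to the $(n+1)\times(n+1)$ unimodular system built from $g_{j}$, $\partial f_{j}/\partial x_{i}$ and $x_{i}$, and then uses the second-order Taylor remainder bound (this is where $C^{(2)}$ is actually used) to convert the resulting linear inequalities into $|f_{j}(\bp_{0}/q)-p_{d+j}/q|<\tfrac12 q^{-\tau-1}$. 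This yields immediately, for every $\bx$, a resonant rational within $\asymp Q^{-(d+1-m\tau)/d}/q$ of $\bx$, which is precisely the full-measure $\limsup$ input needed for the Mass Transference Principle. The condition $\tau<1/m$ enters not as a positivity constraint in a counting estimate but as the condition $\tb m<1$ that makes the Minkowski box have volume $1$ and forces $q\neq 0$.

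So your plan is sound up to, but not including, the key step; replace the proposed equidistribution argument by the Minkowski-with-tangent-linearization argument (Theorem~\ref{diristyle} in the equal-weight case $\tau_{1}=\cdots=\tau_{m}=\tau$) and the proof goes through. Note also that to recover the $\ha^{s}$-measure statement, and not just the dimension lower bound, one must feed the resulting full-measure $\limsup$ into the original Mass Transference Principle (Theorem~\ref{thm:mtp}) rather than the Wang--Wu--Xu rectangle version, which gives only dimension.
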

     \begin{remark}
     In the special case where the submanifold $\mathcal{M}$ is a curve this result has been proven for a wider range of $\tau$. In particular, for any analytic non-degenerate curve $\mathcal{C} \subset \R^{n}$, if
     \begin{equation*}
     \frac{1}{n} < \tau < \frac{3}{2n-1},
     \end{equation*}
      then the dimension result of Theorem ~\ref{second} still holds, that is
      \begin{equation*}
       \dim \s_{n}(\tau) \cap \mathcal{C} \geq s := \frac{n+1}{\tau+1} -(n-1).
      \end{equation*}
      See Theorem 7.2 of \cite{B12} for more details. 
      \end{remark}
     A key result required in the proof of Theorem ~\ref{second} is the Mass Transference Principle \cite{BV06}(see Section 2 for more details). Recently, Beresnevich et.al. \cite{BVVZ17} worked on finding an upper bound on the distribution of rational points within a $\psi$-neighbourhood of manifolds. Using this result they proved the following theorem, giving a corresponding upper bound to Theorem ~\ref{second}.
\begin{theorem} \label{haus_upper_bound}
Let $\M_{f}\subset \R^{n}$ be a manifold defined on an open subset $\U \subset \R^{d}$, and suppose that
\begin{equation} \label{require2}
\ha^{s} \left( \left\{ \alpha \in \U: \left| \det \left( \frac{\partial^{2}f_{j}}{\partial \alpha_{1} \partial \alpha_{i}} (\alpha) \right)_{1 \leq i,j \leq m} \right|=0 \right\} \right)=0,
\end{equation}
for 
\begin{equation*}
s=\frac{n+1}{\tau+1}-m.
\end{equation*}
If
\begin{equation*}
d>\frac{n+1}{2}, \quad \text{and} \quad \frac{1}{n} \leq \tau \leq \frac{1}{2m+1},
\end{equation*}
Then
\begin{equation*}
\dim \s_{n}(\tau) \cap \M_{f} \leq s.
\end{equation*}
\end{theorem}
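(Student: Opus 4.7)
The plan is to derive the upper bound via a counting argument: first realise $\s_{n}(\tau) \cap \M_{f}$ as a limsup of small neighbourhoods around rational points lying close to $\M_{f}$, then apply the rational-point counting estimate of \cite{BVVZ17} to control how many such points appear at each scale, and finally show that the resulting natural cover has finite $s$-Hausdorff measure, which forces $\dim \leq s$.

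First I would translate membership in $\s_{n}(\tau) \cap \M_{f}$ into a condition on the independent variables $\bx \in \U$. If $(\bx,f(\bx)) \in \s_{n}(\tau)$, then for infinitely many $q$ there exist $\bp \in \Z^{d}$ and $\bp' \in \Z^{m}$ with $|qx_{i}-p_{i}| < q^{-\tau}$ for $1 \leq i \leq d$ and $|qf_{j}(\bx) - p'_{j}| < q^{-\tau}$ for $1 \leq j \leq m$. The first family of inequalities confines $\bx$ to a $d$-dimensional cube of side $2q^{-\tau-1}$ centred at some $\bp/q \in \U \cap \Q^{d}$; since $f \in C^{(2)}$, a Taylor expansion then forces $f(\bp/q)$ to lie within $O(q^{-\tau-1})$ of a rational vector with denominator $q$ in $\R^{m}$. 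Denoting by $R_{f}(Q,\psi)$ the set of such rationals $\bp/q \in \U$ with $q \in [Q,2Q)$, we obtain the inclusion
\[
\s_{n}(\tau) \cap \M_{f} \;\subseteq\; \bigcap_{k_{0}\geq 1}\,\bigcup_{k\geq k_{0}}\;\bigcup_{\bp/q \,\in\, R_{f}(2^{k},\, c\cdot 2^{-k(\tau+1)})} B\!\left(\bp/q,\; c'\cdot 2^{-k(\tau+1)}\right),
\]
for constants $c,c'$ depending only on $f$.

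Next I would invoke the counting theorem of Beresnevich, Vaughan, Velani and Zorin \cite{BVVZ17}. Under the hypotheses $d > (n+1)/2$ and the non-degeneracy condition \eqref{require2}, their estimate yields an upper bound of the form $\#R_{f}(Q, c Q^{-\tau-1}) \;\ll\; Q^{\,d+1-m(\tau+1)}$, modulo contributions from the null set appearing in \eqref{require2}; the restriction $\tau \leq 1/(2m+1)$ is precisely the range in which their bound delivers this main term, while $d > (n+1)/2$ is used to ensure that the leading contribution dominates the off-diagonal error.

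Finally I would estimate the $s$-Hausdorff measure of the cover. Each dyadic level $Q=2^{k}$ contributes $\ll Q^{\,d+1-m(\tau+1)}$ balls of radius $\asymp Q^{-\tau-1}$, so
\[
\ha^{s}(\s_{n}(\tau) \cap \M_{f}) \;\ll\; \sum_{k\geq k_{0}} 2^{k(d+1-m(\tau+1))}\cdot 2^{-k s(\tau+1)}.
\]
Substituting $s=(n+1)/(\tau+1)-m$ gives $(s+m)(\tau+1)=n+1$, so the exponent in the summand collapses to $d-n=-m<0$ and the geometric series converges. Hence $\ha^{s}(\s_{n}(\tau) \cap \M_{f}) < \infty$, whence $\dim \leq s$; the exceptional set from \eqref{require2} carries zero $s$-Hausdorff measure by hypothesis and is therefore absorbed harmlessly into the bound. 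The main obstacle is obtaining the counting estimate with the correct main term under the weak regularity condition \eqref{require2}; once that is granted, the covering computation is routine, and the real difficulty lies in the arithmetic geometry of rational points close to curved submanifolds in the restricted range $\tau \leq 1/(2m+1)$.
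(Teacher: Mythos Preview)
The paper does not actually prove Theorem~\ref{haus_upper_bound}; it is quoted (in simplified form) from \cite{BVVZ17}, and no argument for it appears in the present paper. So there is no in-paper proof to compare against. Your sketch is, in spirit, precisely the strategy of \cite{BVVZ17}: express $\s_{n}(\tau)\cap\M_{f}$ as a limsup of small balls around rationals lying close to $\M_{f}$, invoke the rational-point counting theorem under the non-degeneracy hypothesis~\eqref{require2} and the restriction $d>(n+1)/2$, and sum the $s$-measure of the natural cover.

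One genuine slip is worth flagging. After the Taylor step, the constraint on the dependent coordinates reads $\lVert q\,f_{j}(\bp/q)\rVert \ll q^{-\tau}$, not $q^{-\tau-1}$ (you divide $|qf_{j}(\bx)-p'_{j}|<q^{-\tau}$ by $q$ to land in the $O(q^{-\tau-1})$ neighbourhood of the rational, but the counting is governed by the fractional-part condition at scale $q^{-\tau}$). Consequently the correct heuristic and actual count is
\[
\#R_{f}(Q,\,\cdot)\ \ll\ Q^{\,d+1-m\tau}\ =\ Q^{\,n+1-m(\tau+1)},
\]
not $Q^{\,d+1-m(\tau+1)}$. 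Plugging this into your sum at the critical exponent $s=(n+1)/(\tau+1)-m$ gives a summand of size $Q^{0}$, so the series does \emph{not} converge at $s$ on the nose; you only obtain $\ha^{s'}=0$ for every $s'>s$, which is of course enough for the dimension inequality $\dim\le s$. In \cite{BVVZ17} the finer Hausdorff-measure statement at the critical $s$ is secured by a counting estimate with a logarithmic saving, valid in the range $\psi(q)\ge q^{-1/(2m+1)}(\log q)^{2/(2m+1)}$ --- which is exactly why the paper records that restriction immediately after stating the theorem.
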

This is a simplified version of the main result established in \cite{BVVZ17}. In particular the  convergent Hausdorff measure result was proven for general functions $\psi(q) \geq q^{-1/(2m+1)}(\log q)^{2/(2m+1)}$. Further still, the result was proven for the general case of inhomogeneous simultaneous approximation. \par 
Since the establishment of Theorem ~\ref{haus_upper_bound} there has been several results that allow for a broader range of manifolds. Recently Simmons relaxed condition \eqref{require2} as follows (see Theorem 2.1 of \cite{S18}). Suppose there exists some $k \in \N$ such that 
\begin{equation*}
s\big( 1+ \frac{k}{2m+k} \big) >n+1,
\end{equation*}
and
\begin{equation*}
rank \left( \by. \frac{\partial^{2} f}{\partial \alpha_{i} \partial \alpha_{j} }(\alpha) \right)_{1 \leq i,j \leq d} \geq k, \quad \forall \by \in \R^{m} \backslash \{0\},
\end{equation*}
for almost all $\alpha \in \U$ (w.r.t the Hausdorff $(s-m)$-measure). Then
\begin{equation*}
\dim \s_{n}(\tau) \cap \mathcal{M}_{f} \leq s-m.
\end{equation*}
\par 
Results of this type have also been proven for hypersurfaces. In \cite{H20} Huang proved an upper bound on the number of rational points within a small neighbourhood of a general $C^{(l)}$ hypersurface $\mathcal{H} \subseteq \R^{n}$ with Gaussian curvature bounded above zero. This theorem provided a variety of results, including the following (Theorem 5 of \cite{H20}).
\begin{theorem}[Huang \cite{H20}] \label{huang}
Let $n \geq 3$ be an integer and let
\begin{equation*}
l=\max \left\{ \left\lfloor \frac{n-1}{2} \right\rfloor +5, n+1 \right\}.
\end{equation*}
 For any approximation function $\psi$, any $s>\frac{n-1}{2}$, and any $C^{(l)}$ hypersurface $\mathcal{H} \subseteq \R^{n}$ with nonvanishing Gaussian curvature everywhere except possibly on a set of zero Hausdorff $s$-measure, we have that
\begin{equation*}
\ha^{s}(\s_{n}(\psi) \cap \mathcal{H})=0 \quad \text{if} \quad \sum_{q=1}^{\infty} \psi(q)^{s+1}q^{n-1-s}< \infty.
\end{equation*}
\end{theorem}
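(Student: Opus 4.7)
The plan is to establish this as a convergence-type Hausdorff measure statement via the Hausdorff--Cantelli lemma. Writing
\begin{equation*}
\s_{n}(\psi) \cap \mathcal{H} = \limsup_{q \to \infty} \Delta_q,
\qquad
\Delta_q := \left\{\bx \in \mathcal{H} : \max_{1 \leq i \leq n} |q x_i - p_i| < \psi(q)\ \text{for some } \bp \in \Z^n \right\},
\end{equation*}
the task reduces to producing, for each $q$, a cover of $\Delta_q$ whose $s$-th powers of radii sum (over all $q$) to a convergent series.

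Next I would localise. Using a finite $C^{(l)}$ partition of unity it suffices to work on Monge patches $\{(\bx, g(\bx)) : \bx \in \U\}$ where $g \in C^{(l)}(\U)$ has Gaussian curvature bounded away from zero; the curvature-vanishing locus is ignored because its intersection with $\s_{n}(\psi) \cap \mathcal{H}$ has $\ha^{s}$-measure zero by hypothesis. On such a patch, any rational $\bp/q$ with $\mathrm{dist}(\bp/q, \mathcal{H}) \leq \psi(q)/q$ contributes to $\Delta_q$ a cap of diameter $\ll \psi(q)/q$, via the implicit function theorem applied to the graph of $g$.

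The technical heart is a quantitative count of rational points in a thin neighbourhood of $\mathcal{H}$: for a dyadic range $q \sim Q$ and $\delta$ in a suitable range,
\begin{equation*}
N_{\mathcal{H}}(Q, \delta) := \#\!\left\{(\bp, q) : Q \leq q < 2Q,\ \mathrm{dist}(\bp/q, \mathcal{H}) \leq \delta \right\} \ll Q^{n-1}\delta + Q^{n-1-\eta}
\end{equation*}
for some $\eta > 0$. This is the main contribution of \cite{H20}: smoothing the indicator of the neighbourhood and applying Poisson summation converts the problem into bounding oscillatory integrals whose phases come from the local parametrisation of $\mathcal{H}$, and nonvanishing Gaussian curvature supplies the cancellation needed for stationary-phase and van der Corput bounds, producing the saving $\eta$. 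The regularity threshold $l = \max\{\lfloor (n-1)/2 \rfloor + 5, n+1\}$ is dictated by the derivatives consumed in the phase analysis together with a Yomdin-type control on the curvature-degenerate locus.

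Combining the cap estimate with the counting bound, Hausdorff--Cantelli yields
\begin{equation*}
\sum_{q \geq 1} N_{\mathcal{H}}\!\left(q, \tfrac{\psi(q)}{q}\right)\left(\tfrac{\psi(q)}{q}\right)^{s}
\ll \sum_{q} \psi(q)^{s+1}q^{n-1-s} + \sum_{q} \psi(q)^{s} q^{n-1-\eta-s}.
\end{equation*}
The first sum is exactly the hypothesis, and the second is bounded, for $s > (n-1)/2$, by combining the decay of $\psi(q)$ forced by convergence of the first sum with the power saving $\eta$ from the counting. The main obstacle is unambiguously this rational point count: achieving the uniform power saving $\eta > 0$ across a wide range of $\delta$ is the technical core of \cite{H20} and requires a delicate Fourier-analytic treatment that exploits Gaussian curvature in an essential way.
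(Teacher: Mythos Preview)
The paper does not prove this statement: Theorem~\ref{huang} is quoted verbatim from \cite{H20} (as Theorem~5 there) and is used only as background to contextualise the known upper bounds for $\s_{n}(\tau)\cap\m$. There is therefore no proof in the present paper to compare your proposal against.

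That said, your outline is a fair high-level sketch of Huang's argument in \cite{H20}: the Hausdorff--Cantelli reduction, localisation to Monge patches away from the curvature-vanishing set, and the identification of the rational-point count near $\mathcal{H}$ as the essential input are all correct. Two points where your sketch is loose: first, the counting estimate in \cite{H20} is not quite of the clean form $Q^{n-1}\delta + Q^{n-1-\eta}$ with a single $\eta$; the secondary term carries a more delicate dependence on $\delta$ and $Q$, and it is precisely the shape of that error that forces the restriction $s>\tfrac{n-1}{2}$. Second, your final claim that the secondary sum $\sum_q \psi(q)^s q^{n-1-\eta-s}$ converges ``by combining the decay of $\psi(q)$ forced by convergence of the first sum with the power saving $\eta$'' is not justified as stated, since convergence of $\sum_q \psi(q)^{s+1}q^{n-1-s}$ imposes no pointwise decay on $\psi$ without a monotonicity assumption; in \cite{H20} this is handled by the explicit form of the error term together with the condition $s>\tfrac{n-1}{2}$, not by an abstract bootstrap.
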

In the special case of $\tau$-approximable functions the above theorem implies that
\begin{equation*}
\dim \s_{n}(\tau) \cap \mathcal{H} \leq \frac{n+1}{\tau+1}-1.
\end{equation*}
Thus we note that when considered against the applicable range of approximation functions from Theorem ~\ref{second} this is the best possible upper bound. \par 
Similar results have also been found for simultaneous approximation on affine subspaces. For a matrix $M \in \R^{d \times (n-d)}$, and row vector $\alpha \in \R^{n-d}$ let 
\begin{equation*}
\mathfrak{A}:= \left\{ \left( \bx, (1, \bx).\left( \begin{array}{c}
\alpha \\
M
\end{array} \right) \right): \bx \in \R^{d} \right\}.
\end{equation*}
 Huang and Liu  \cite{HL18} proved that such affine subspace $\mathfrak{A} \subseteq \R^{n}$ with bounded Diophantine properties on the matrix $\left( \begin{array}{c}
\alpha \\
M
\end{array} \right) \in \R^{(d+1) \times (n-d)}$, and any approximation function $q^{-\tau}$ with $\tau \geq 1/n$, then
\begin{equation*}
\dim \s_{n}(\tau) \cap \mathfrak{A} \leq \frac{n+1}{\tau+1}-(n-d).
\end{equation*}
\par 
 For general manifolds Theorem \ref{second} and Theorem \ref{haus_upper_bound} collectively give the following corollary.
\begin{corollary} \label{simultaneous_dim_manifold}
Let $\M_{f}$ be a manifold satisfying \eqref{require2} and $d>\frac{n+1}{2}$. Suppose that
\begin{equation*}
\frac{1}{n} \leq \tau \leq \frac{1}{2m+1},
\end{equation*}
 then
 \begin{equation*}
 \dim \s_{n}(\tau) \cap \M_{f} = \frac{n+1}{\tau+1}-m.
 \end{equation*}
 \end{corollary}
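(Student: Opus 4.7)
The plan is to deduce Corollary \ref{simultaneous_dim_manifold} as a direct synthesis of the lower bound in Theorem \ref{second} and the upper bound in Theorem \ref{haus_upper_bound}, so the entire task reduces to verifying that the hypotheses of both results hold simultaneously on the range $\frac{1}{n} \leq \tau \leq \frac{1}{2m+1}$.

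First I would establish the lower bound. Theorem \ref{second} requires only that $\M_f$ be a $C^{(2)}$ submanifold of $\R^{n}$ of codimension $m$ and that $\frac{1}{n} \leq \tau < \frac{1}{m}$. Under the corollary's hypotheses we have $\tau \leq \frac{1}{2m+1}$, and since $2m+1 > m$ for every $m \geq 1$ it follows that $\tau \leq \frac{1}{2m+1} < \frac{1}{m}$. Hence Theorem \ref{second} applies and yields
\begin{equation*}
\dim \s_{n}(\tau) \cap \M_f \;\geq\; \frac{n+1}{\tau+1} - m.
\end{equation*}

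Next I would establish the matching upper bound by a direct application of Theorem \ref{haus_upper_bound}. The corollary assumes that $\M_f$ satisfies the non-degeneracy condition \eqref{require2} with $s = \frac{n+1}{\tau+1} - m$, that $d > \frac{n+1}{2}$, and that $\tau$ lies in the admissible range $\frac{1}{n} \leq \tau \leq \frac{1}{2m+1}$. These are precisely the hypotheses of Theorem \ref{haus_upper_bound}, so that theorem gives
\begin{equation*}
\dim \s_{n}(\tau) \cap \M_f \;\leq\; \frac{n+1}{\tau+1} - m.
\end{equation*}
Combining the two inequalities delivers the claimed equality.

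There is no genuine obstacle here beyond the compatibility check on the interval of exponents, and this is the only place a reader might pause. The only subtlety worth emphasising in the write-up is that the upper bound of Theorem \ref{haus_upper_bound} is strictly more restrictive than the lower bound of Theorem \ref{second} (the cutoff moves from $\frac{1}{m}$ down to $\frac{1}{2m+1}$), so the corollary can only assert equality on the narrower window $[\tfrac{1}{n},\tfrac{1}{2m+1}]$, whereas the lower bound in fact remains valid on the larger window $[\tfrac{1}{n},\tfrac{1}{m})$.
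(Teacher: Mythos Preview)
Your proposal is correct and matches the paper's own reasoning exactly: the paper simply states that Theorems~\ref{second} and~\ref{haus_upper_bound} ``collectively give'' the corollary without further argument, and your write-up supplies precisely the hypothesis check needed to justify that claim. Your added remark on the relative ranges of validity for the two bounds is accurate and a welcome clarification.
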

\par 
 In this paper we adapt the arguments given in \cite{BLVV17} for the proof of Theorem \ref{second} to establish the following result, a weighted simultaneous version of Theorem ~\ref{second}.
\begin{theorem} \label{final}
Let $\m := \left\{ (\bx, f(\bx)) : \bx \in \U \subset \R^{d} \right\}$ where $f: \U \rightarrow \R^{m}$ with $ f \in C^{(2)}$. Let $\bt = (\tau_{1}, \dots, \tau_{n}) \in \R^{n}_{>0}$ with
\begin{equation*}
\tau_{1} \geq \dots \geq \tau_{d} \geq \max_{d+1 \leq i \leq n} \left\{ \tau_{i}, \frac{1-\sum_{j=1}^{m}\tau_{j+d}}{d} \right\}, \quad \text{and} \quad \sum_{i=1}^{m}\tau_{d+i} < 1.
\end{equation*}
Then
\begin{equation*}
\dim \left( \w_{n}(\bt) \cap \m \right) \geq \underset{1 \leq j \leq d}{\min} \left\{ \frac{n+1 + \sum_{i=j}^{n}(\tau_{j}-\tau_{i})}{\tau_{j}+1} - m \right\}.
\end{equation*}
\end{theorem}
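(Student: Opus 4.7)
The plan is to adapt the ubiquity-plus-mass-transference strategy of \cite{BLVV17} used to prove Theorem \ref{second}, replacing the ball-to-ball Mass Transference Principle by the ball-to-rectangle mass transference theorem of \cite{WWX15}, which is tailored precisely to weighted (anisotropic) targets. Since $f\in C^{(2)}$, the Monge projection $\pi:(\bx,f(\bx))\mapsto \bx$ is bi-Lipschitz and so preserves Hausdorff dimension; thus it suffices to realise $\pi(\w_n(\bt)\cap\m)\subset \U$ as a $\limsup$ set of shrinking rectangles in $\U\subset \R^d$ and then invoke \cite{WWX15} to compute its dimension.

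For each $(q,\bp)\in\N\times\Z^d$ with $\bp/q\in\U$, pick a nearest integer vector $\mathbf{r}=\mathbf{r}(\bp,q)\in\Z^m$ to $qf(\bp/q)$, and call the pair $(q,\bp)$ \emph{resonant} if in addition $|qf_j(\bp/q)-r_j|<\tfrac12\,q^{-\tau_{d+j}}$ for every $j=1,\ldots,m$. To each resonant pair I associate the rectangle $R_{q,\bp}\subset\U$ centred at $\bp/q$ with half side-length $c\,q^{-\tau_i-1}$ in the $i$-th coordinate, where $c>0$ is a small constant depending only on $\|\nabla f\|$ and $\|f''\|$ on a compact subset of $\U$. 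A first-order Taylor expansion of $f$ around $\bp/q$, together with the ordering hypothesis $\tau_d\geq \tau_{d+j}$, then shows that every $\bx\in R_{q,\bp}$ satisfies both $|qx_i-p_i|<q^{-\tau_i}$ for $i\leq d$ and $|qf_j(\bx)-r_j|<q^{-\tau_{d+j}}$ for $j\leq m$, so that $\pi(\w_n(\bt)\cap\m)\supseteq \limsup R_{q,\bp}$ as $(q,\bp)$ ranges over resonant pairs.

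The next step is to show that the larger balls $B(\bp/q,\rho(q))$ (over the same resonant points) form a $\limsup$ system of full Lebesgue measure in $\U$, for the ubiquity function $\rho(q)=q^{-\gamma}$ with $\gamma=(d+1-\sum_{j=1}^m\tau_{d+j})/d$. Heuristically, among the $\sim q^d$ admissible $\bp$'s, the proportion that is resonant is $\sim q^{-\sum_j\tau_{d+j}}$, a positive power of $q$ thanks to $\sum_j\tau_{d+j}<1$, yielding $\sim Q^{d+1-\sum_j\tau_{d+j}}$ resonant pairs with $q\leq Q$, exactly what is needed for the balls of radius $\rho(Q)$ to cover $\U$. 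Making this heuristic rigorous and uniform refines the Dirichlet/pigeonhole ubiquity argument of \cite{BLVV17}, with the $C^{(2)}$ smoothness supplying the equidistribution of $qf(\bp/q)\pmod 1$ required to guarantee that a positive proportion of the $\bp$'s is indeed resonant. The hypothesis $\tau_d\geq (1-\sum_j\tau_{d+j})/d$ is equivalent to $\gamma\leq 1+\tau_i$ for every $i\leq d$, ensuring both $R_{q,\bp}\subset B(\bp/q,\rho(q))$ and that the \cite{WWX15} shape exponents $a_i:=(1+\tau_i)/\gamma$ are all $\geq 1$, as that theorem requires.

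Finally, applying \cite{WWX15} to this system with shape exponents $(a_i)_{i=1}^d$ produces a Hausdorff dimension for $\limsup R_{q,\bp}$ of the form $\min_{1\leq j\leq d}\{\cdots\}$, and a short algebraic manipulation---substituting $\gamma=(d+1-\sum_j\tau_{d+j})/d$ and $n=d+m$---identifies this with the expression claimed in the theorem, the outer $-m$ emerging from $d+1=n+1-m$. The main obstacle is the ubiquity step: rigorously counting the resonant pairs while simultaneously controlling the distribution of $\bp/q$ on $\U$ and the auxiliary integer-approximation condition defining resonance is where the full strength of the three hypotheses is used, and where the argument of \cite{BLVV17} must be refined most carefully. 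The remaining pieces---the Taylor reduction above and the algebraic matching of the \cite{WWX15} formula with the claimed minimum---are routine.
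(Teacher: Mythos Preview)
Your overall strategy is exactly the paper's: reduce via the bi-Lipschitz Monge projection to a $\limsup$ set in $\U\subset\R^d$, produce a full-measure $\limsup$ of balls of radius comparable to $q^{-\gamma}$ with $\gamma=(d+1-\sum_{j}\tau_{d+j})/d$ over the ``resonant'' rationals, apply the Wang--Wu--Xu ball-to-rectangle transference with exponents $a_i=(1+\tau_i)/\gamma$, and then match the resulting minimum with the claimed formula. Your Taylor/containment step and the algebra at the end are also the same as in the paper.

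The one place where your sketch diverges, and where it would fail as written, is the full-measure (``ubiquity'') step. You propose to obtain it by \emph{counting} resonant pairs via the equidistribution of $qf(\bp/q)\pmod 1$, citing $C^{(2)}$ smoothness as the input. That mechanism is not available in this generality: for an arbitrary $C^{(2)}$ map $f$ (e.g.\ an affine $f$, which the theorem allows) there is no reason for $qf(\bp/q)$ to equidistribute modulo $1$, so a counting argument of the type you describe cannot give a uniform positive proportion of resonant $\bp$'s. The paper (following \cite{BLVV17}) bypasses counting entirely: for each fixed $\bx\in\U$ it applies Minkowski's theorem for systems of linear forms to the $(n+1)\times(n+1)$ system built from the tangent data $g_j=f_j-\sum_i x_i\,\partial f_j/\partial x_i$, yielding directly a nonzero $(\bp,q)$ with $|qx_i-p_i|\ll Q^{-(1-\sum_j\tau_{d+j})/d}$ and $|qg_j(\bx)+\sum_i p_i\,\partial f_j/\partial x_i-p_{d+j}|\ll Q^{-\tau_{d+j}}$; a second-order Taylor bound then converts the latter into $|qf_j(\bp/q)-p_{d+j}|<\tfrac12 q^{-\tau_{d+j}}$, i.e.\ resonance. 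This is a pointwise Dirichlet-type statement, not an equidistribution/counting one, and it is what makes the argument go through for all $C^{(2)}$ manifolds without curvature hypotheses. Replace your heuristic by this Minkowski step and the rest of your outline is correct and coincides with the paper's proof.
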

\begin{remark}
\begin{enumerate}[label=\arabic*)]
\item Note that the minimum is taken over only the $\tau_{i}$ for $1 \leq i \leq d$, that is the approximation functions over the independent variables $\bx \in \R^{d}$. This condition may only be due to the setup of our proof and the fact that all approximation functions over the independent variables are larger than all the dependent variable approximation functions. While we suspect this to be unnecessary, the mass transference style result used in the proof of Theorem ~\ref{final} forces such conditions to be applied. \\
\item We only have a lower bound here rather than equality. This lower bound agrees with both Theorem ~\ref{first} and Theorem ~\ref{huang} so in these cases this is the best lower bound. In order to prove the upper bound result to Theorem ~\ref{final} we need an upper bound result on the number of rational points within a $\Psi$-neighbourhood of a manifold. While there are many various results for the simultaneous case (e.g. \cite{H96}, \cite{BVVZ17},\cite{H20}), a weighted simultaneous version is yet to be proven.
\end{enumerate}
\end{remark}
We would like to generalise Theorem ~\ref{final} to more general approximation functions. To achieve this we must apply some constraints on our approximation function. Given a decreasing approximation function $\Psi=(\psi_{1}, \dots , \psi_{n})$ define the upper order $v(\Psi)=\left(v_{1}, \dots , v_{n} \right)$ of $\Psi$ at infinity by 
\begin{equation} \label{limsup}
v_{i}:= \underset{q \rightarrow \infty}{\limsup} \frac{-\log \psi_{i}(q)}{\log q} \, , \quad  1 \leq i \leq n.
\end{equation}
Given such a function, we can state the following Corollary.
\begin{corollary} \label{general}
Let $\m := \left\{ (\bx, f(\bx)) : \bx \in \U \subset \R^{d} \right\}$ where $ f: \U \rightarrow \R^{m}$ with $f \in C^{(2)}$. For any approximation function $\Psi = (\psi_{1}, \dots , \psi_{n})$ such that \eqref{limsup} are positive finite, and
\begin{equation*}
v_{1} \geq v_{2} \geq \dots \geq v_{d} \geq \max_{d+1 \leq i \leq n} \left\{ v_{i}, \frac{1-\sum_{i=d+1}^{n}v_{i}}{d} \right\}, \quad \text{and} \quad \sum_{i=d+1}^{n}v_{i} < 1.
\end{equation*}
we have that
\begin{equation*}
\dim \left( \w_{n} ( \Psi) \cap \m \right) \geq  \underset{1 \leq j \leq d}{\min} \left\{ \frac{n+1+ \sum_{i=j}^{n} \left( v_{j}-v_{i}\right)}{v_{j} + 1 }\right\}.
\end{equation*}
\end{corollary}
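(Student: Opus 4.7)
The plan is to sandwich $\Psi$ between pure power-law approximation vectors with slightly perturbed exponents and then invoke Theorem~\ref{final} together with a limiting argument. For each $\epsilon>0$, set $\bt_{\epsilon}:=(v_1+\epsilon,\dots,v_n+\epsilon)$. By the definition of $v_i$ as a $\limsup$, there exists $Q=Q(\epsilon)\in\N$ such that for every $q\geq Q$ and every $1\leq i\leq n$ one has $\psi_i(q)\geq q^{-(v_i+\epsilon)}$. Consequently, for any $\bx\in\R^n$, any $(\bp,q)\in\Z^n\times\N$ with $q\geq Q$ satisfying $|qx_i-p_i|<q^{-(v_i+\epsilon)}$ for all $i$ automatically satisfies $|qx_i-p_i|<\psi_i(q)$. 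Since the ``infinitely many'' condition defining $\W_n(\bt_\epsilon)$ involves all but finitely many such $(\bp,q)$, this yields the inclusion $\W_n(\bt_\epsilon)\subseteq\W_n(\Psi)$, and hence $\dim(\W_n(\Psi)\cap\M)\geq\dim(\W_n(\bt_\epsilon)\cap\M)$.

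The next step is to verify that $\bt_\epsilon$ meets the hypotheses of Theorem~\ref{final} for all sufficiently small $\epsilon>0$. The monotonicity $v_1+\epsilon\geq\dots\geq v_d+\epsilon$ and the bounds $v_d+\epsilon\geq v_i+\epsilon$ for $d+1\leq i\leq n$ are immediate. The inequality $v_d+\epsilon\geq(1-\sum_{j=1}^{m}(v_{d+j}+\epsilon))/d$ rearranges to $d(v_d+\epsilon)+m\epsilon\geq 1-\sum_{j=1}^{m}v_{d+j}$, which follows from the corresponding hypothesis on $v$ with room to spare. Finally, the strict condition $\sum_{i=1}^{m}v_{d+i}<1$ guarantees $\sum_{i=1}^m(v_{d+i}+\epsilon)<1$ provided $\epsilon<(1-\sum_{i=1}^m v_{d+i})/m$.

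Applying Theorem~\ref{final} to $\bt_\epsilon$ then gives
\[
\dim(\W_n(\Psi)\cap\M)\;\geq\;\min_{1\leq j\leq d}\left\{\frac{n+1+\sum_{i=j}^{n}(v_j-v_i)}{v_j+\epsilon+1}-m\right\},
\]
and letting $\epsilon\to 0^+$, which is legitimate since the right-hand side is continuous in $\epsilon$, yields the claimed lower bound. The argument is essentially routine once Theorem~\ref{final} is in hand; the only point requiring genuine attention is to use the correct direction of the $\limsup$ inequality, namely $\psi_i(q)\geq q^{-(v_i+\epsilon)}$ eventually, so that the \emph{smaller} set $\W_n(\bt_\epsilon)$ sits inside $\W_n(\Psi)$ rather than the other way around. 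A secondary subtlety is that the bound on $\epsilon$ coming from $\sum v_{d+i}<1$ is what forces this last condition to be strict rather than $\leq$.
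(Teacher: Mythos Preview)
Your proof is correct and takes essentially the same approach as the paper: use the definition of the $\limsup$ to obtain $\psi_i(q)\geq q^{-(v_i+\epsilon)}$ for all large $q$, deduce $\W_n(v(\Psi)+\boldsymbol{\epsilon})\subset\W_n(\Psi)$, apply Theorem~\ref{final} to the perturbed exponent vector, and let $\epsilon\to 0^+$; you are simply more explicit than the paper in verifying that $\bt_\epsilon$ satisfies the hypotheses of Theorem~\ref{final} for small $\epsilon$. One minor observation: your final displayed bound correctly carries the $-m$ term inherited from Theorem~\ref{final}, whereas the corollary as printed (and the paper's own proof) omits it---this is evidently a typo in the paper, and your expression is the one that actually follows.
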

\begin{proof}
By  properties of the approximation function given by \eqref{limsup} we have that, for any $\epsilon >0$ there exists a $ q_{0} \in \N$ such that for all $q>q_{0}$
\begin{equation*}
\psi_{i}(q) \geq q^{-v_{i}-\epsilon} \, , \text{for each } \, \, 1 \leq i \leq n.
\end{equation*}
Using this property, for $\mathbf{\epsilon}=(\epsilon, \dots, \epsilon) \in \R^{n}_{+}$ we obtain that
\begin{equation*}
\w_{n}( v(\Psi)+ \mathbf{\epsilon})  \subset \w_{n}(\Psi) 
\end{equation*}
so by Theorem ~\ref{final}, and letting $\epsilon \rightarrow 0$, we have that
\begin{align*}
\dim \left( \w_{n}(\Psi) \cap \m \right) & \geq \lim_{ \epsilon \to 0^{+}}\dim \left( \w_{n}(v(\Psi)+ \epsilon) \cap \m \right), \\
& \geq \underset{1 \leq i \leq d}{\min} \left\{ \frac{n+1+ \sum_{i=j}^{n} \left( v_{j}-v_{i} \right)}{ v_{j}+1} \right\},
\end{align*}
as required.
\end{proof}
\begin{remark}
Note that this proof is similar to the proof of Corollary 1 from \cite{R98}. However we can use the weaker condition of the $\limsup$ rather than $\lim$ as we only need the lower bound rather than equality.
\end{remark}
The remainder of this paper is laid out as follows. In the following section we recall some key theorems required in the proof of Theorem ~\ref{final}. The main result of this section is the Mass Transference Principle from balls to rectangles, proven in \cite{WWX15}. In Section 3 we prove a Dirichlet style result on weighted simultaneous approximation over manifolds. This result is vital in order to apply the mass transference style theorem of Section 2. In the final section we combine the results of Sections 2 and 3 to prove Theorem ~\ref{final}.

\section{Mass Transference theorems}

 The Mass Transference Principle (MTP) was developed by Beresnevich and Velani in \cite{BV06}. Before stating the MTP we need to introduce some notation. Let $B=B(x,r)$ denote a ball $B \subset\R^{n}$, with centre $x \in \R^{n}$ and radius $r \in \R_{+}$. Denote $B^{s}$ to be the ball with centre $x$ and radius $r^{s/n}$, that is 
\begin{equation*} 
 B^{s}=B(x, r^{s/n}).
\end{equation*}
In particular, note that $B^{n}=B$.
\begin{theorem}[The Mass Transference Principle] \label{thm:mtp} Let $\mathcal{U}$ be an open subset of $\mathbb{R}^{k}$. Let $\{B_{i} \}_{i \in \mathbb{N}}$ be a sequence of balls in $\mathbb{R}^{k}$ centred in $\mathcal{U}$ with radius $r_{i} \rightarrow 0 $ as $i \rightarrow \infty$. Let $s>0$ and suppose that for any ball $B$ in $\mathcal{U}$
\begin{equation*}
\mathcal{H}^{k} \left( B \cap \limsup_{i \rightarrow \infty} B_{i}^{s} \right) = \mathcal{H}^{k}(B).
\end{equation*}
Then, for any ball $B$ in $\mathcal{U}$
\begin{equation*}
\mathcal{H}^{s} \left( B \cap \limsup_{i \rightarrow \infty} B_{i}^{k} \right) = \mathcal{H}^{s}(B).
\end{equation*}
\end{theorem}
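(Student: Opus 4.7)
The plan is to prove Theorem~\ref{thm:mtp} by combining a Cantor-type construction with the Mass Distribution Principle. The cases $s \geq k$ are either trivial (for $s > k$ both sides vanish in $\mathbb{R}^k$) or directly equivalent to the hypothesis, so I focus on $0 < s < k$, in which range the balls $B_i^s$ have radius $r_i^{s/k} > r_i$ and are therefore \emph{enlarged} versions of the originals $B_i = B_i^k$. Fix a ball $B \subset \mathcal{U}$. The goal is to construct a compact set $K \subset B \cap \limsup_i B_i$ and a probability measure $\mu$ on $K$ satisfying the Frostman-type bound $\mu(A) \leq C\, r^s$ uniformly in every ball $A$ of radius $r$. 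The Mass Distribution Principle will then yield $\mathcal{H}^s(B \cap \limsup_i B_i) \geq 1/C > 0$, and a standard rescaling argument applied to a countable basis of sub-balls in $\mathcal{U}$ upgrades this local positivity to the claimed equality $\mathcal{H}^s(B \cap \limsup_i B_i) = \mathcal{H}^s(B)$.

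The set $K$ will be built as the intersection $\bigcap_n \bigcup_\alpha J_n^{(\alpha)}$ of nested families of pairwise disjoint level-$n$ balls with rapidly decreasing radii $R_n$, each $J_n^{(\alpha)}$ itself being one of the balls $B_i$. Starting from $J_0 := B$, at the inductive step the hypothesis $\mathcal{H}^k\bigl(J_n^{(\alpha)} \cap \limsup_i B_i^s\bigr) = \mathcal{H}^k(J_n^{(\alpha)})$ supplies an abundance of enlarged balls $B_i^s$ inside each parent $J_n^{(\alpha)}$. A Vitali-type extraction produces a disjoint subcollection of these $B_i^s$'s with radii narrowly localised around $R_{n+1}^{s/k}$ (hence with $r_i \asymp R_{n+1}$); the original balls $B_i$, chosen in precisely the right number so that the total $s$-content $\sum_i r_i^s$ matches the designated mass of $J_n^{(\alpha)}$, become the children of $J_n^{(\alpha)}$. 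The dimensional coincidence driving the whole argument is $\mathrm{vol}_k(B_i^s) \asymp r_i^s$, so that the $k$-volume abundance supplied by the hypothesis is numerically the same as the $s$-content supply needed on the conclusion side.

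The measure $\mu$ is then defined inductively by giving $J_0$ unit mass and distributing each $\mu(J_n^{(\alpha)})$ among its children $B_i$ in proportion to $r_i^s$. The main obstacle, and the technical heart of the proof, is verifying the Frostman estimate $\mu(A) \leq C\,|A|^s$ for an \emph{arbitrary} ball $A$ of radius $r$, not only for balls appearing in the construction. One performs a case analysis according to which scale window $r$ falls into --- roughly $R_n > r > R_{n+1}^{s/k}$ versus $R_{n+1}^{s/k} > r > R_{n+1}$ --- using, respectively, a volume-packing bound on the disjoint enlarged balls $B_i^s$ meeting $A$, and the fact that in the smaller regime $A$ can meet only a bounded number of children. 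Ensuring these estimates dovetail across all intermediate scales is what forces the rapid decay of $R_n$ and the careful calibration of how many children each parent spawns, and is the place where the proof is genuinely delicate rather than routine.
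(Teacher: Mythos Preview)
The paper does not prove this theorem. Theorem~\ref{thm:mtp} is stated in Section~2 as a known result, attributed to Beresnevich and Velani \cite{BV06}, and is used as a black box (in fact, it is only mentioned for context; the tool actually applied later is Theorem~\ref{wang} of Wang--Wu--Xu). There is therefore no in-paper proof to compare your proposal against.

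For what it is worth, your outline is broadly faithful to the original argument in \cite{BV06}: a Cantor-type construction inside a fixed ball $B$, a $K_{G,B}$-style covering lemma to select well-separated sub-balls $B_i$ whose enlargements $B_i^{s}$ pack a definite proportion of the $k$-volume, the key identity $\mathrm{vol}_k(B_i^{s}) \asymp r_i^{s}$ linking the hypothesis to the $s$-content, and a mass distribution supported on the resulting Cantor set verified via a two-regime Frostman estimate. The one point you understate is the passage from ``$\mathcal{H}^s>0$ on every ball'' to the full equality $\mathcal{H}^s(B\cap\limsup B_i)=\mathcal{H}^s(B)$: in \cite{BV06} this is not a mere rescaling but uses that the constructed measure can be made to carry arbitrarily large (indeed, $\eta\,\mathcal{H}^k(B)$ for any $\eta$) total mass, forcing $\mathcal{H}^s=\infty$ on every ball when $s<k$, which is what ``$=\mathcal{H}^s(B)$'' means in that range.
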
    
\begin{remark}
In this theorem we take the dimension function $f: \mathbb{R}_{+} \rightarrow \mathbb{R}_{+}$ to be $f(r)=r^{s}$. However in \cite{BV06} it is shown that we can choose any dimension function providing it is a continuous increasing function with $f(r) \rightarrow 0$ as $r \rightarrow 0$
\end{remark}
The  MTP is a key tool in establishing Theorem ~\ref{second}. The general method for establishing such results is to formulate a Dirichlet style theorem in the desired setting, and then increasing the radius by the power of $s/n$ to form the original $\limsup$ set. Geometrically we can consider this as moving from a $\limsup$ set of balls with radius $r$ to a limsup set of balls with radius $r^{s/n}$. With the set $\w_{n}(\bt)$ we have the problem that the set is described by a $\limsup$ set of hyperrectangles rather than balls, so we cannot use the MTP. The following theorem by Wang, Wu and Xu \cite{WWX15} is similar to the MTP with the difference that we move from a $\limsup$ set of balls to a $\limsup$ set of hyperrectangles. In this case we use very similar notation to that of above, with the exception that we introduce a weight vector. Define a weight vector to be of the form $\ba=(a_{1}, a_{2}, \dots, a_{n}) \in \R^{n}_{+}$ with $a_{1} \geq \dots \geq a_{n}>0$. Given a ball $B=B(\bx,r) \subset \R^{n}$, define $B^{\ba}=B\left(\bx,(r^{a_{1}},r^{a_{2}}, \dots , r^{a_{n}})\right)= \left\{ y \in \R^{n} : |x_{i}-y_{i}|< r^{a_{i}} \right\}$, i.e. a hyperrectangle with side lengths $2r^{a_{i}}$ and centre $\bx$.
\begin{theorem}[Wang, Wu, Xu \cite{WWX15}] \label{wang}
Let $(\bx_{j})_{j \in \N}$ be a sequence of points in $[0,1]^{n}$ and $(r_{j})_{j \in \N}$ be a sequence of positive real numbers such that $r_{j} \rightarrow 0$ as $j \rightarrow \infty$, then let $B_{j}=B(\bx_{j},r_{j})$. Let $\ba=(a_{1}, a_{2}, \dots , a_{n}) \in \R^{n}_{+}$ such that $ a_{1} \geq a_{2} \geq \dots \geq a_{n} \geq 1$. Suppose that 
\begin{equation*}
\mu_{n}\left( \underset{j \rightarrow \infty}{\limsup} B_{j} \right) = 1,
\end{equation*}
where $\mu_{n}$ is the $n$ dimensional Lebesgue measure. Then,
\begin{equation*}
\dim \left(  \underset{j \rightarrow \infty}{\limsup} B_{j}^{\ba}  \right) \geq \underset{1 \leq k \leq n}{\min} \left\{ \frac{n+ \sum_{i=k}^{n}(a_{k}-a_{i})}{a_{k}} \right\}.
\end{equation*}
\end{theorem}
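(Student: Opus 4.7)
Fix $s$ strictly below $s^{*} := \min_{1 \leq k \leq n}(n + \sum_{i=k}^{n}(a_{k}-a_{i}))/a_{k}$ and aim to construct, inside any fixed ball $U_{0} \subseteq [0,1]^{n}$, a Cantor-like subset $K \subseteq \limsup_{j \to \infty} B_{j}^{\ba}$ carrying a Borel probability measure $\mu$ satisfying $\mu(B(\bx,r)) \lesssim r^{s}$ for every ball $B(\bx,r)$ of small radius. The mass distribution principle then delivers $\dim K \geq s$, and letting $s \uparrow s^{*}$ proves the theorem. After passing to a subsequence, I would group the balls $B_{j}$ by scale: fix a rapidly decreasing sequence $R_{0} > R_{1} > \cdots \to 0$ (say $R_{\ell} = R_{\ell-1}^{t}$ with $t$ a large constant) and at the $\ell$-th stage work only with those indices $j$ for which $r_{j}$ is comparable to $R_{\ell}$.

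\textbf{Cantor construction.} I would build nested families $\mathcal{F}_{0} \supset \mathcal{F}_{1} \supset \cdots$ of pairwise disjoint rectangles $B_{j}^{\ba}$, each with $j$ in the $\ell$-th scale and lying inside some member of $\mathcal{F}_{\ell-1}$. Given a parent $P \in \mathcal{F}_{\ell-1}$, first replace $P$ by a slight inward shrinking $P' \subset P$ (by a margin exceeding $R_{\ell}$), so that any $B_{j}$ with centre in $P'$ is automatically contained in $P$. The hypothesis $\mu_{n}(\limsup B_{j}) = 1$ implies that the balls $B_{j}$ from scales $\geq \ell$ cover almost all of $P'$, and a Vitali ($5r$-covering) extraction at the $\ell$-th scale yields pairwise disjoint balls $B_{j}$ whose union covers at least a fixed fraction $\kappa \in (0,1)$ of $P'$; the children of $P$ are the anisotropic rectangles $B_{j}^{\ba} \subseteq B_{j}$. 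The set $K := \bigcap_{\ell} \bigcup_{R \in \mathcal{F}_{\ell}} R$ is automatically a subset of $\limsup B_{j}^{\ba}$, and the natural candidate for $\mu$ distributes unit mass equally among siblings at each level.

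\textbf{Ball-to-mass estimate and the main obstacle.} The decisive step is verifying $\mu(B(\bx,r)) \lesssim r^{s}$. Since the children at level $\ell$ are anisotropic with half-sides $R_{\ell}^{a_{1}} \geq \cdots \geq R_{\ell}^{a_{n}}$, the admissible range of $r$ splits into $n$ regimes $R_{\ell}^{a_{k}} \leq r < R_{\ell}^{a_{k-1}}$ (with the convention $a_{0} := +\infty$). In the $k$-th regime the ball $B(\bx,r)$ is wider than the children along directions $i \geq k$ but narrower along $i < k$, so it meets at most $O\big(\prod_{i=k}^{n}(r/R_{\ell}^{a_{i}})\big)$ children in any single parent cluster. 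Combining this geometric count with the per-child mass produced by the construction, the required estimate reduces in the $k$-th regime to the algebraic inequality $s\, a_{k} \leq n + \sum_{i=k}^{n}(a_{k}-a_{i})$; imposing this for every $k$ is exactly equivalent to $s \leq s^{*}$, which is why the minimum over $k$ emerges in the final bound. The principal obstacle is the calibration of the Vitali extraction so that each parent has \emph{at least} the volume-ratio number of children --- for this the rapidly decreasing scale parameter $t$ is crucial, both to drown out boundary losses from the inward shrinking $P \to P'$ and to decouple the rectangle packings at successive scales --- and then propagating the mass estimate uniformly across all scales and regimes, where the heart of the anisotropic geometry lies.
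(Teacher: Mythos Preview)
The paper does not contain a proof of this statement. Theorem~\ref{wang} is quoted from Wang, Wu and Xu \cite{WWX15} and used as a black box: the paper states it in Section~2, then immediately applies it (first to re-derive the lower bound in Theorem~\ref{rynne}, and later in Section~4 to prove Theorem~\ref{final}). There is therefore nothing in the present paper to compare your proposal against.

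That said, your outline is the standard route for results of this type and is, in broad strokes, the strategy of \cite{WWX15}: a Cantor construction driven by a $5r$-covering/Vitali extraction from the full-measure $\limsup$, a mass distribution on the resulting nested rectangles, and a case analysis of $\mu(B(\bx,r))$ according to where $r$ sits among the anisotropic scales $R_\ell^{a_1},\dots,R_\ell^{a_n}$, which is precisely what produces the minimum over $k$. Two points to be careful with if you flesh it out. First, grouping by a prescribed scale $R_\ell$ is not automatic: the hypothesis gives full measure of $\bigcup_{j\ge N}B_j$ for every $N$, but no control on how the radii $r_j$ are distributed, so your extraction has to allow children of \emph{varying} radii at level $\ell$ (bounded above by a threshold) rather than a single common scale, and the counting and mass estimates must be uniform in that variation. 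Second, ``distribute mass equally among siblings'' is too crude when siblings have different radii; one typically weights by volume of the corresponding $B_j$ so that the per-child mass matches the volume ratio $\mu_n(B_j)/\mu_n(P)$ up to constants, which is what makes the regime-$k$ count combine cleanly with the mass to give $r^{s}$. With those adjustments your plan is sound.
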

We see that to use Theorem ~\ref{wang} we need two key ingredients. Firstly we need a $\limsup$ set of balls with full Lebesgue measure. Secondly, we need to construct a weight vector $\ba$ that we can use to transform our set of full Lebesgue measure to our desired $\limsup$ set of hyperrectangles.
We illustrate how this theorem can be used to attain the lower bound of Theorem ~\ref{rynne}, as shown in \cite{D17}.
\begin{proof} (Lower bound of Theorem ~\ref{rynne}) \\
Let 
\begin{equation*}
N:= \left\{ (\bp,q) \in \Z^{n} \times \N : 0 \leq p_{i} \leq q \, \text{ for}\, 1 \leq i \leq n \right\},
\end{equation*}
for $\bp=(p_{1}, \dots,p_{n})$, and also let
\begin{equation*}
B_{(\bp,q)}=B\left( \frac{\bp}{q}, q^{-1-1/n} \right)
\end{equation*}
for some $(\bp,q) \in N$. By Dirichlet's theorem we have that
\begin{equation*}
\mu_{n}\left( \underset{(\bp,q) \in N}{\limsup} B_{(\bp,q)} \right)=1.
\end{equation*}
If we take $\ba=(a_{1}, \dots, a_{n})$ to be the weight vector with coefficients
\begin{equation*}
a_{i}=\frac{n(1+\tau_{i})}{1+n} \quad \text{for} \quad 1 \leq i \leq n,
\end{equation*}
then
\begin{equation*}
B_{(\bp, q)}^{\ba}= \left\{ \bx \in \R^{n} : \left| x_{i}-\frac{p_{i}}{q} \right| < q^{-1-\tau_{i}} \quad \text{for} \quad 1 \leq i \leq n \right\}.
\end{equation*}
So we have that 
\begin{equation*}
\dim \w_{n}(\bt) = \dim \left( \underset{(\bp,q) \in N}{\limsup} B_{(\bp,q)}^{\ba} \right).
\end{equation*}
Using Theorem \ref{wang} we get that
\begin{align*}
\dim \w_{n}(\bt) \geq& \underset{1 \leq j \leq n}{\min} \left\{ \frac{ n+ \sum_{i=j}^{n} \left( \frac{n(1+\tau_{j})}{1+n} -\frac{n(1+\tau_{i})}{1+n} \right)}{ \frac{n(1+\tau_{j})}{1+n}} \right\}, \\
\geq &  \underset{1 \leq j \leq n}{\min} \left\{ \frac{ n+1+ \sum_{i=j}^{n} \left( \tau_{j} -\tau_{i} \right)}{1+\tau_{j}} \right\},
\end{align*}
as required.
\end{proof}
The last measure theoretic result we will be using to prove Theorem ~\ref{final} is a lemma from \cite{BV08}, which essentially states that the Lebesgue measure of a $\limsup$ set remains the same when the balls are altered by some fixed constant.
\begin{lemma}
\label{lemma:balls}
Let $\{B_{i}\}$ be a sequence of balls in $\mathbb{R}^{k}$ with $\mu_{k}(B_{i}) \rightarrow 0$ as $i \rightarrow \infty$. Let $\{U_{i}\}$ be a sequence of Lebesgue measurable sets such that $U_{i} \subset B_{i}$ for all $i$. Assume that for some $c>0$, $\mu_{k}(U_{i}) \geq c\mu_{k}(B_{i})$ for all $i$. Then the sets 
\begin{equation*}
\lim_{i \rightarrow \infty} \sup U_{i} \qquad \text{and} \qquad \lim_{i \rightarrow \infty } \sup B_{i}
\end{equation*}
have the same Lebesgue measure.
\end{lemma}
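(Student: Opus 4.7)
The plan is to prove the equality of measures via the Lebesgue density theorem. Since $U_{i}\subset B_{i}$ for every $i$, we have $\limsup U_{i}\subset\limsup B_{i}$, so one inequality is immediate. It remains to show that the set
\begin{equation*}
A\ :=\ \limsup_{i\to\infty} B_{i}\ \setminus\ \limsup_{i\to\infty} U_{i}
\end{equation*}
has $\mu_{k}(A)=0$. This set is Lebesgue measurable because each $B_{i}$ is a ball, each $U_{i}$ is assumed measurable, and $\limsup$ of measurable sets is measurable.

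The core of the argument is a uniform density estimate for $A$. Fix $x\in A$. By definition $x\in B_{i}$ for infinitely many indices $i$, while there is some $N(x)$ such that $x\notin U_{i}$ for all $i\geq N(x)$; let $I(x)$ denote the infinite set of $i\geq N(x)$ with $x\in B_{i}$. Because $\mu_{k}(B_{i})\to 0$, the radii $r_{i}$ of $B_{i}$ tend to zero, so the scales along which we test density will shrink to $0$. For any $i\in I(x)$, the containment $x\in B_{i}$ gives $B_{i}\subset B(x,2r_{i})$ and $\mu_{k}(B_{i})=2^{-k}\mu_{k}(B(x,2r_{i}))$. Combined with the hypothesis $\mu_{k}(U_{i})\geq c\,\mu_{k}(B_{i})$ this yields
\begin{equation*}
\mu_{k}(U_{i})\ \geq\ 2^{-k}c\,\mu_{k}(B(x,2r_{i})).
\end{equation*}
Since $i\geq N(x)$ forces $A\cap U_{i}=\emptyset$ (any point of $A$ lies outside $U_{i}$ for all sufficiently large $i$, and we have chosen $N(x)$ precisely so that this already holds at $i$), and since $U_{i}\subset B(x,2r_{i})$, we obtain
\begin{equation*}
\mu_{k}\bigl(A\cap B(x,2r_{i})\bigr)\ \leq\ \mu_{k}\bigl(B(x,2r_{i})\setminus U_{i}\bigr)\ \leq\ (1-2^{-k}c)\,\mu_{k}(B(x,2r_{i})).
\end{equation*}

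This estimate is valid along the sequence of radii $2r_{i}$ with $i\in I(x)$, which tends to $0$. Hence the upper density of $A$ at every $x\in A$ is bounded by $1-2^{-k}c<1$. If $\mu_{k}(A)>0$ then by the Lebesgue density theorem almost every point of $A$ would have density exactly $1$ in $A$, contradicting this uniform upper bound. Therefore $\mu_{k}(A)=0$ and the two $\limsup$ sets have equal Lebesgue measure.

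The only delicate point is bookkeeping the two scales: the natural scale $r_{i}$ of $B_{i}$ versus the density-test scale $2r_{i}$ centred at $x$. Passing between them costs a dimension-dependent factor $2^{-k}$, but crucially nothing that depends on $x$ or $i$, which is what makes the density bound uniform and allows the Lebesgue density theorem to finish the argument. No regularity or disjointness hypothesis on the $B_{i}$ is needed beyond $\mu_{k}(B_{i})\to 0$, which is precisely what guarantees the test radii shrink to zero.
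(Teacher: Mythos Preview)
The paper does not actually prove this lemma; it is quoted from \cite{BV08} without proof, so there is no argument in the paper to compare yours against. Your density-theorem approach is the standard one and is essentially correct, but there is a genuine logical slip in the middle.

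The error is the sentence ``Since $i\geq N(x)$ forces $A\cap U_{i}=\emptyset$.'' This is false: $N(x)$ depends on the particular point $x$, and other points $y\in A$ may well lie in $U_{i}$ for indices $i$ with $N(x)\leq i<N(y)$. The parenthetical justification (``any point of $A$ lies outside $U_{i}$ for all sufficiently large $i$, and we have chosen $N(x)$ precisely so that this already holds at $i$'') conflates the $x$-dependent threshold with a uniform one. Consequently the inequality $\mu_{k}(A\cap B(x,2r_{i}))\leq \mu_{k}(B(x,2r_{i})\setminus U_{i})$ is not justified as written.

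The fix is routine and keeps your argument intact. Write
\[
A\;=\;\bigcup_{N\geq 1} A_{N},\qquad A_{N}:=\Bigl(\limsup_{i} B_{i}\Bigr)\setminus\bigcup_{i\geq N} U_{i},
\]
which is exactly the decomposition coming from $(\limsup U_{i})^{c}=\bigcup_{N}\bigl(\bigcup_{i\geq N}U_{i}\bigr)^{c}$. For each fixed $N$ one genuinely has $A_{N}\cap U_{i}=\emptyset$ for all $i\geq N$, and every $x\in A_{N}$ still lies in infinitely many $B_{i}$ with $i\geq N$. Your density computation then shows that the upper density of $A_{N}$ at every $x\in A_{N}$ is at most $1-2^{-k}c<1$, so $\mu_{k}(A_{N})=0$ by the Lebesgue density theorem, and hence $\mu_{k}(A)=0$ by countable subadditivity. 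With this stratification inserted, the proof is complete.
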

We can use Lemma ~\ref{lemma:balls} to change the radius of the balls used in our construction of the $\limsup$ set by a constant and still ensure we have full Lebesgue measure. 

\section{Dirichlet Style theorem on Manifolds}

In order to apply Theorem ~\ref{wang} we construct a $\limsup$ set of balls with full Lebesgue measure. We achieve this by varying the approximation functions only over the dependent variables, so we can form a $\limsup$ set from the balls centred at certain rational points in the independent variable space. The theorem below constructs such a set.
\begin{theorem} \label{diristyle}
Let $\m:=\left\{ ( \bx, f(\bx)) : \bx \in \U \subset \R^{d} \right\}$ where $f: \U \rightarrow \R^{m}$ with $f \in C^{(2)}$. Let $ \bt=(\tau_{1}, \dots , \tau_{m}) \in \R^{m}_{>0}$, and let $\tb=\frac{1}{m}\sum_{i=1}^{m}\tau_{i}$. If
\begin{equation*} 
 \tb m <1,
\end{equation*}
 then for any $\bx \in \U$ there is an integer $Q_{0}$ such that for any $Q \geq Q_{0}$ there exists $(p_{1}, \dots, p_{n}, q) \in \Z^{n} \times \N$ with $1 \leq q \leq Q$ and $(\frac{p_{1}}{q}, \dots, \frac{p_{d}}{q}) \in \U$ such that
\begin{equation}\label{independent}
\left| x_{i} - \frac{p_{i}}{q} \right| < \frac{4^{m/d}}{q(Q^{1-\tb m})^{1/d}}, \quad 1 \leq i \leq d,
\end{equation}
and 
\begin{equation} \label{dependent}
\left| f_{j} \left( \frac{p_{1}}{q}, \dots , \frac{p_{d}}{q} \right) - \frac{p_{d+j}}{q} \right| < \frac{q^{-\tau_{j}-1}}{2}. \quad 1 \leq j \leq m.
\end{equation}
Further, for any $\bx \in \U \backslash \Q^{d}$ there exists infinitely many $(p_{1}, \dots, p_{n}, q) \in \Z^{n} \times \N$ with $(\frac{p_{1}}{q}, \dots, \frac{p_{d}}{q}) \in \U$ satisfying \eqref{dependent} and
\begin{equation} \label{end}
\left| x_{i} - \frac{p_{i}}{q} \right| < 4^{m/d} q^{-1-(1-\tb m)/d}, \quad 1 \leq i \leq d.
\end{equation}
\end{theorem}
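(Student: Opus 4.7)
The plan is to apply Minkowski's theorem on linear forms to an $(n+1)$-dimensional system in the unknowns $(p_1,\ldots,p_n,q)\in\Z^{n+1}$ that linearises $f$ about $\bx$. Concretely, take the forms $L_i=qx_i-p_i$ for $1\le i\le d$, $L_{n+1}=q$, and, for $1\le j\le m$, the linearised form
\[
L_{d+j}=qf_j(\bx)+\sum_{i=1}^{d}(\partial_i f_j)(\bx)(p_i-qx_i)-p_{d+j}.
\]
A block expansion along the last row shows the coefficient matrix has determinant $\pm1$. Imposing $|L_i|<A:=4^{m/d}Q^{-(1-\tb m)/d}$, $|L_{d+j}|<B_j:=Q^{-\tau_j}/4$, and $|L_{n+1}|\le Q$, the product
\[
A^{d}\cdot\prod_{j=1}^{m}B_j\cdot Q=4^{m}Q^{-(1-\tb m)}\cdot 4^{-m}Q^{-m\tb}\cdot Q=1
\]
meets Minkowski's threshold and yields a nonzero integer point. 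For $Q\ge Q_0(\bx)$ all of $A,B_j$ are less than $1$, which forces $q\ne 0$; replacing $(\bp,q)$ by $(-\bp,-q)$ if necessary gives $1\le q\le Q$, and the bound $|x_i-p_i/q|<A/q$ puts $\bp/q\in\U$. Estimate \eqref{independent} is immediate from $|L_i|<A$.

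The next step is to pass from $f_j(\bx)$ to $f_j(\bp/q)$ in order to obtain \eqref{dependent}; this is where the $C^{(2)}$ assumption enters. Taylor's theorem gives
\[
f_j(\bp/q)=f_j(\bx)+\nabla f_j(\bx)(\bp/q-\bx)+R_j,\qquad |R_j|\le C_j|\bp/q-\bx|^{2},
\]
with $C_j$ depending only on $\sup\|H_{f_j}\|$ over a fixed compact neighbourhood of $\bx$. Multiplying by $q$, and using that the linear Taylor term was deliberately built into $L_{d+j}$, one obtains
\[
|qf_j(\bp/q)-p_{d+j}|\le |L_{d+j}|+q|R_j|\le\frac{Q^{-\tau_j}}{4}+\frac{C_j d A^2}{q}.
\]
Since $q\le Q$, the first summand is at most $q^{-\tau_j}/4$. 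For the second, the hypothesis $\tb m<1$ forces every $\tau_j<1$, so $q^{1-\tau_j}\ge 1$ whenever $q\ge 1$; consequently $C_j dA^2/q\le q^{-\tau_j}/4$ as soon as $4C_jdA^2\le 1$, which holds for $Q\ge Q_0$ since $A\to 0$. Adding the two estimates gives \eqref{dependent}.

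For the final assertion, \eqref{independent} implies \eqref{end} because $q\le Q$ yields $Q^{(1-\tb m)/d}\ge q^{(1-\tb m)/d}$. If, for $\bx\in\U\setminus\Q^d$, only finitely many distinct triples $(\bp,q)$ arose as $Q\to\infty$, then some $(\bp^{*},q^{*})$ would satisfy $|x_i-p_i^{*}/q^{*}|<4^{m/d}/(q^{*}Q^{(1-\tb m)/d})$ for arbitrarily large $Q$, forcing $\bx=\bp^{*}/q^{*}\in\Q^d$, a contradiction. Hence infinitely many distinct triples satisfy both \eqref{dependent} and \eqref{end}.

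The main obstacle is precisely the conversion from approximating $f_j$ at $\bx$ (which is what Minkowski naturally delivers) to approximating $f_j$ at $\bp/q$ (which is what \eqref{dependent} requires). A crude Lipschitz estimate would leave an error of order $A$, and absorbing it into $q^{-\tau_j}/2$ would demand the extraneous condition $\tau_j\le(1-\tb m)/d$ that is \emph{not} in the hypothesis. Embedding the first-order Taylor term directly into the Minkowski system reduces the error to order $A^{2}$, and the additional factor $1/q$ appearing in $q|R_j|$ means that the only surviving constraint on $\tau_j$ is $\tau_j<1$, which follows at once from $\sum_{i}\tau_i<1$.
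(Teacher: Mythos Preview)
Your proposal is correct and follows essentially the same route as the paper: set up the $(n+1)$-dimensional Minkowski system with the first-order Taylor expansion of $f_j$ built in (the paper writes this via the auxiliary $g_j:=f_j-\sum_i x_i\,\partial_i f_j$, which is algebraically identical to your $L_{d+j}$), force $q\neq 0$, use Taylor's theorem to replace $f_j(\bx)$ by $f_j(\bp/q)$ with a second-order remainder, and absorb the remainder using $\tau_j<1$ (which follows from $\sum_j\tau_j<1$); the infinitude argument by contradiction is the same. Your closing paragraph explaining \emph{why} the linear Taylor term must be inserted into the Minkowski system (so that the residual error is $O(A^2)$ rather than $O(A)$) is a helpful gloss that the paper leaves implicit.
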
 
Before proving Theorem ~\ref{diristyle} we will state several properties of our manifold $\m$ that we will be using. Given that $\m$ is constructed by a twice continuously differentiable function $f$ we can choose a suitable $\U$ such that, without loss of generality, the following two constants exist:
\begin{equation} \label{c}
C = \underset{\underset{1 \leq j \leq m}{1 \leq i,k \leq d}}{ \max} \, \underset{\bx \in \U}{\sup} \left| \frac{\partial^{2} f_{j}}{\partial x_{i} \partial x_{k} } (\bx) \right| < \infty,
\end{equation}
and 
\begin{equation} \label{d}
D= \underset{\underset{1 \leq j \leq m}{1 \leq i \leq d}}{\max} \, \underset{\bx \in \U}{\sup} \left| \frac{\partial f_{j}}{\partial x_{i} } (\bx) \right| < \infty.
\end{equation}
A brief outline of the proof is as follows; firstly we alter the system of inequalities to a suitable form so Minkowski's Theorem for systems of linear forms can be applied. We then use Taylor's approximation Theorem to return the system of inequalities to the initial form and show that the dependent variable inequalities can be displayed in terms of the independent variable approximation. We finish by concluding that there are infinitely many different integer solutions via a proof by contradiction. The proof given below is a generalisation of the proof of Theorem 4 in \cite{BLVV17} to the case of approximations with weights.
\begin{proof}
Define
\begin{equation*}
g_{j} := f_{j} - \sum_{i=1}^{d} x_{i} \frac{\partial f_{j}}{\partial x_{i}}, \quad 1 \leq j \leq m,
\end{equation*}
and consider the system of inequalities
\begin{align}
\left| q g_{j}(\bx) + \sum_{i=1}^{d} p_{i} \frac{\partial f_{j}}{\partial x_{i}}(\bx) - p_{d+j} \right| &< \frac{Q^{-\tau_{i}}}{4}, \quad  \, 1 \leq j \leq m, \label{eq1} \\
\left| q x_{i}-p_{i} \right| &< \frac{4^{m/d}}{Q^{(1-\tb m)/d}}, \quad \, 1 \leq i \leq d, \label{eq2} \\
|q| & \leq Q. \label{eq3}
\end{align}
Taking the product of the right hand side of the above inequalities, and taking the determinant of the matrix
\begin{equation*}
A=
\left( \begin{array}{ccccccc}
g_{1} & \frac{\partial f_{1}}{\partial x_{1}} & \dots & \frac{\partial f_{1}}{\partial x_{d}} & -1 & \dots  & 0 \\
\vdots & \vdots & & \vdots & \vdots & \ddots & \vdots \\
g_{2} & \frac{\partial f_{m}}{\partial x_{1}} & \dots & \frac{\partial f_{m}}{\partial x_{d}} & 0 & \dots  & -1 \\
x_{1} & -1 & \dots & 0 & 0 & \dots & 0 \\
\vdots & \vdots& \ddots & \vdots& \vdots & \ddots& \vdots \\
x_{d} & 0 & \dots & -1 & 0 & \dots & 0 \\
1 & 0 & \dots& \dots & \dots & \dots & 0 
\end{array} \right),
\end{equation*}
then by Minkowski's Theorem for systems of linear forms, there exists a non-zero integer solution $(\bp,q) \in \Z^{n+1}$ to the inequalities \eqref{eq1}-\eqref{eq3}. We now show that this system of inequalities implies inequalities \eqref{independent}-\eqref{dependent}. Firstly, fix some $\bx \in \U$ and as $\U$ is open there exists a ball $B( \bx, r)$ for some $r>0$ which is contained in $\U$. Define
\begin{equation*}
\q := \left\{ Q \in \N : \left( 4^{-m} Q^{1-\tb m}\right)^{-1/d} < \min \left\{ 1 , r, \left( \frac{1}{2 Cd^{2}} \right)^{1/2} \right\} \right\},
\end{equation*}
where $C$ is defined by \eqref{c}. As $\tb m< 1$ we have that $\left( 4^{-m} Q^{1-\tb m}\right)^{-1/d} \to 0$ as $Q \to \infty$, so there exists an integer $Q_{0}$ such that for all $Q \geq Q_{0}$ we have that $Q \in \q$. We will show that for any $Q \in \q$ the solution $(p_{1}, \dots, p_{n}, q)$ to the system of inequalities \eqref{eq1}-\eqref{eq3} is a solution to \eqref{independent}-\eqref{dependent}. \par 
Suppose $q=0$. By the definition of the set $\q$ we have that
\begin{equation*}
\left( 4^{-m} Q^{1-\tb m}\right)^{-1/d} < 1.
\end{equation*}
By the set of inequalities \eqref{eq2} we have that $|p_{i}|<1$, hence $p_{i}=0$ for all $1 \leq i \leq d$. Further, from \eqref{eq1} we can see that
\begin{equation*}
|p_{d+j}|< \frac{Q^{-\tau_{i}}}{4} < 1,
\end{equation*}
for $1\leq j \leq m$. This would conclude that our solution $(p_{1}, \dots, p_{n}, q)=\boldsymbol{0}$ which contradicts Minkowski's Theorem for systems of linear forms, thus $|q| \geq 1$. Without loss of generality we will assume that $q \geq 1$. Dividing \eqref{eq2} by $q$ gives us that $\left(\frac{p_{1}}{q}, \dots, \frac{p_{d}}{q}\right) \in B(\bx, r) \subset \U$, and note that \eqref{independent} is satisfied upon dividing \eqref{eq2} by $q$.\par 
 Lastly we need to prove that \eqref{eq1}-\eqref{eq3} implies \eqref{dependent}. By Taylor's approximation Theorem
 \begin{equation*}
 f_{j}\left(\frac{p_{1}}{q}, \dots, \frac{p_{d}}{q} \right)= f_{j}(\bx) + \sum_{i=1}^{d} \frac{\partial f_{j}}{\partial x_{i}}(\bx) \left( \frac{p_{i}}{q}-x_{i} \right) + R_{j}( \bx, \hat{\bx} ),
 \end{equation*}
 for some $\hat{\bx}$ on the line connecting $\bx$ and $\left(\frac{p_{1}}{q}, \dots, \frac{p_{d}}{q}\right)$, and
 \begin{equation*}
 R_{j}(\bx, \hat{\bx}) = \frac{1}{2} \sum_{i=1}^{d} \sum_{k=1}^{d} \frac{\partial^{2}f_{j}}{\partial x_{i} \partial x_{k}} (\hat{\bx}) \left( \frac{p_{i}}{q} - x_{i} \right) \left( \frac{p_{k}}{q} - x_{k} \right).
 \end{equation*}
 We may rewrite \eqref{eq1} using Taylor's theorem and our definition of $g_{j}$ as
 \begin{equation*}
 \left| q g_{j}(\bx) + \sum_{i=1}^{d} p_{i} \frac{\partial f_{j}}{\partial x_{i}}(\bx) - p_{d+j} \right|= \left| q f_{j}\left(\frac{p_{1}}{q}, \dots, \frac{p_{d}}{q} \right) - p_{d+j} - q R_{j}(\bx, \hat{\bx}) \right|.
 \end{equation*}
 Using the triangle inequality and the assumption that 
 \begin{equation}
 |q R_{j}(\bx, \hat{\bx})| < \frac{q^{-\tau_{i}}}{4}, \label{taylorerror}
 \end{equation}
 we obtain that
 \begin{equation*}
 \left| q f_{j}\left( \frac{p_{1}}{q}, \dots, \frac{p_{d}}{q} \right) - p_{d+j} \right| < \frac{Q^{-\tau_{i}}}{4} + \frac{q^{-\tau_{i}}}{4}.
 \end{equation*}
Noting the monotonicity of the approximation function and dividing by $q$ we obtain
 \begin{equation*}
 \left|  f_{j}\left( \frac{p_{1}}{q}, \dots, \frac{p_{d}}{q} \right) - \frac{p_{d+j}}{q} \right| < \frac{q^{-\tau_{i}-1}}{2},
 \end{equation*}
 thus \eqref{dependent} is satisfied. To complete the first part of the theorem it remains to show that \eqref{taylorerror} is satisfied for all $Q \in \q$. Using the definition of $R_{j}(\bx, \hat{\bx})$ we have that
 \begin{align*}
 |qR_{j}(\bx, \hat{\bx})| =& \left| \frac{q}{2} \sum_{i=1}^{d} \sum_{k=1}^{d} \frac{\partial^{2}f_{j}}{\partial x_{i} \partial x_{k}} (\hat{\bx}) \left( \frac{p_{i}}{q} - x_{i} \right) \left( \frac{p_{k}}{q} - x_{k} \right) \right|, \\
 \leq & \frac{Cqd^{2}}{2}\left( \frac{4^{m/d}}{q Q^{(1-\tb m)/d}}\right)^{2},
 \end{align*}
 for $1 \leq j \leq m$. Hence we must show that
 \begin{equation*}
  \frac{Cqd^{2}}{2}\left( \frac{4^{m/d}}{q Q^{(1-\tb m)/d}}\right)^{2} < \frac{q^{-\tau_{i}}}{4},
  \end{equation*}
  for $1 \leq j \leq m$. Rearranging the equation we obtain the inequality
  \begin{equation*}
  \left( \frac{4^{m}}{Q^{(1-\tb m)}}\right)^{1/d} < \left( \frac{q^{1-\tau_{i}}}{2 Cd^{2}} \right)^{1/2}.
  \end{equation*}
  Considering that $\tb m < 1$ we have that each $\tau_{i}<1$, and so $\underset{q \in \N}{\inf}q^{1-\tau_{i}}=1$ for each $1 \leq i \leq m$. Thus by the definition of the set $\q$ the above inequality is satisfied by all $Q \in \q$, so \eqref{taylorerror} is true for all $1 \leq j \leq m$. \par
 We now prove the second part of the theorem, that is that there is infinitely many integer vector solutions. Suppose that there are only finitely many such $q$ and let $A$ be the corresponding set. As $\bx \in \U \backslash \Q^{d}$ there exists some $1 \leq j \leq d$ where $x_{j} \notin \Q$. Fix such $j$, then there exists some $\delta >0$ such that
 \begin{equation*}
 \delta \leq \underset{q \in A , \, p_{j} \in \Z}{\min} |qx_{j}- p_{j}|.
 \end{equation*}
 By \eqref{independent} we now have that
    \begin{equation*}
    \delta \leq |qx_{j} - p_{j}| \leq \frac{4^{m/d}}{Q^{(1-\tb m)/d}}.
    \end{equation*}
However, as $\q$ is an infinite set and $Q^{(1-\tb m)/d} \rightarrow \infty$ as $Q \rightarrow \infty$, we have a contradiction so there are infinitely many different $q$. Lastly, as $q\leq Q$, we can replace $Q$ by $q$ in \eqref{independent} to obtain \eqref{end} as desired.
\end{proof}

\section{Proof of theorem \ref{final}}

We are now in a position to prove Theorem ~\ref{final}. To do so we construct a $\limsup$ set of balls satisfying the conditions of Theorem ~\ref{diristyle}. The $\limsup$ set will thus have full Lebesgue measure. Next we choose a suitable weight vector $\ba$ that we use to transform our $\limsup$ set of balls to a $\limsup$ set of hyperrectangles with a known lower bound for its Hausdorff dimension. The proof is completed by showing that the constructed $\limsup$ set is at least contained within our set $\w_{n}(\bt) \cap \m$, thus our lower bound is a lower bound for $\w_{n}(\bt) \cap \m$.

\begin{proof}
Take the set
\begin{multline}
N(f, \tau) := \Bigg\{ (\bp, q) \in \Z^{n} \times \N : \left( \frac{p_{1}}{q}, \dots , \frac{p_{d}}{q} \right) \in \U, \, \\ \text{and} \, \left| f_{j}\left( \frac{p_{1}}{q}, \dots, \frac{p_{d}}{q} \right) - \frac{p_{d+j}}{q} \right| < q^{-\tau_{d+j}-1}, \, \, 1 \leq j \leq m \Bigg\} . \nonumber
\end{multline}
In view of Theorem ~\ref{diristyle} we have that for almost all $\bx \in \U$ there are infinitely many different vectors $(\bp, q ) \in N(f, \tau)$ satisfying
\begin{equation*}
\left| x_{i}-\frac{p_{i}}{q}\right| < 4^{m/d} q^{-1-(1-\tb m)/d}  , \quad 1 \leq i \leq d,
\end{equation*}
where $\tb = \frac{1}{m} \sum_{i=1}^{m}\tau_{d+i}$. By Lemma ~\ref{lemma:balls}, we can choose a constant $k>0$ such that for almost every $\bx \in \U$ there are infinitely many different vectors $(\bp, q ) \in N(f, \tau)$ satisfying
\begin{equation*}
\left| x_{i}- \frac{p_{i}}{q}\right| < k q^{-1-(1-\tb m)/d} , \quad 1 \leq i \leq d.
\end{equation*}
Take the ball 
\begin{equation*}
B_{(\bp,q)} := \left\{ \bx \in \U : \left| x_{i}-\frac{p_{i}}{q}\right| < k q^{-1-(1-\tb m )/d} \quad \text{for} \quad 1 \leq i \leq d \right\}.
\end{equation*}
By Theorem ~\ref{diristyle} and Lemma ~\ref{lemma:balls} we have that
\begin{equation*}
\mu_{d}\left( \underset{(\bp,q) \in N(f, \tau)}{\limsup} B_{(\bp,q)} \right)=1,
\end{equation*}
where $\mu_{d}$ is the $d$ dimensional Lebesgue measure. Let $\ba=(a_{1}, \dots, a_{d}) \in \R^{d}_{+}$ be a weight vector with each
\begin{equation}
a_{i}=\frac{d(1+ \tau_{i})}{d + 1-\tb m}, \, \quad 1 \leq i \leq d. \label{ai}
\end{equation} 
Note that by the condition that $\tau_{i} \geq \frac{1-\tb m}{d}$ for all $1 \leq i \leq d$, we have that each $a_{i}\geq1$. $B_{(\bp,q)}^{\ba}$ is the hyperrectangle with the following properties: 
\begin{equation*}
B_{(\bp,q)}^{\ba}= \left\{ \bx \in \U: \left| x_{i}-\frac{p_{i}}{q} \right| < k^{a_{i}} q^{-1-\tau_{i}}, \, \, 1 \leq i \leq d \right\}.
\end{equation*}
By Theorem ~\ref{wang} we have that
\begin{equation*}
\dim \left( \underset{(\bp,q) \in N(f , \tau)}{\limsup} B_{(\bp,q)}^{\ba} \right) \geq \underset{1 \leq j \leq d}{\min} \left\{ \frac{d + \sum_{i=j}^{d}(a_{j}- a_{i})}{a_{j}} \right\}.
\end{equation*}
Replacing each $a_{i}$ with \eqref{ai} we have that 
\begin{align*}
 \dim & \left( \underset{(\bp,q) \in N(f , \tau)}{\limsup} B_{(\bp,q)}^{\ba} \right) \geq   \underset{1 \leq j \leq d}{\min}\left\{ \frac{d + \sum_{i=j}^{d}\left(\frac{d(1+ \tau_{j})}{d + 1-\tb m}- \frac{d(1+ \tau_{i})}{d + 1-\tb m}\right)}{\frac{d(1+ \tau_{j})}{d + 1-\tb m}} \right\}, \\
& \quad \quad \quad \quad = \underset{1 \leq j \leq d}{\min}\left\{ \frac{d(d + 1-\tb m)+ \sum_{i=j}^{d}(d(1+ \tau_{j})- d(1+ \tau_{i}))}{d(1+ \tau_{j})} \right\}, \\
& \quad = \underset{1 \leq j \leq d}{\min}\left\{ \frac{d+1-\tb m+ \sum_{i=j}^{d}(\tau_{j}- \tau_{i})}{(1+ \tau_{j})} \right\}.
 \end{align*}
 Using the definition of $\tb$ and that $d=n-m$ we may rewrite this as
 \begin{align*}
  \dim & \left( \underset{(\bp,q) \in N(f , \tau)}{\limsup} B_{(\bp,q)}^{\ba} \right) \geq \underset{1 \leq j \leq d}{\min} \left\{ \frac{n-m + 1-\sum_{i=d+1}^{n}\tau_{i} + \sum_{i=j}^{d} (\tau_{j}- \tau_{i})}{1+ \tau_{j}} \right\}, \\
 & \quad \quad \quad  =  \underset{1 \leq j \leq d}{\min} \left\{ \frac{n-m + 1+\sum_{i=d+1}^{n}(\tau_{j}-\tau_{i})-m\tau_{j} + \sum_{i=j}^{d} (\tau_{j}- \tau_{i})}{1+ \tau_{j}} \right\}, \\
  & \quad \quad =  \underset{1 \leq j \leq d}{\min} \left\{ \frac{n + 1 -m(1+\tau_{j})+ \sum_{i=j}^{n} (\tau_{j}- \tau_{i})}{1+ \tau_{j}} \right\}, \\
  & \quad =  \underset{1 \leq j \leq d}{\min} \left\{ \frac{n+1 + \sum_{i=j}^{n} (\tau_{j}- \tau_{i})}{1+ \tau_{j}}-m \right\},
 \end{align*}
 as required. We now finish by showing that
 \begin{equation*}
\dim(\w_{n}(\bt) \cap \m) \geq \dim\left(\underset{(\bp,q) \in N(f , \tau)}{\limsup} B_{(\bp,q)}^{\ba}\right).
\end{equation*}
 Note that any $\bf{y} \in$ $\w_{n}(\bt) \cap \m$ must have infinitely many solutions $(\bp,q) \in \mathbb{Z}^{n} \times \mathbb{N}$ to the following system of inequalities
\begin{align}
| qx_{i}-p_{i}| &< q^{-\tau_{i}}, \, 1 \leq i \leq d,  \label{inequal1}\\
|qf(\bx)-p_{d+j}| & < q^{-\tau_{d+j}}, \, 1 \leq j \leq m, \label{inequal2}
\end{align}
where $\bf{y}=(\bx, f(\bx))$ for some $\bx=(x_{1}, \dots, x_{d}) \in \U$. Let the set of $\bx$ satisfying \eqref{inequal1}-\eqref{inequal2} be denoted by $\pi_{d}(\w_{n}(\bt) \cap \m)$. This set is the orthogonal projection of $\w_{n}(\bt) \cap \m$ onto $\R^{d}$. A result of fractal geometry states that a bi-Lipschitz mapping of a set has the same Hausdorff dimension of the original set (see Proposition 3.3 of \cite{F04}). As the projection $\pi_{d}$ is bi-Lipschitz it is sufficient to prove that 
\begin{equation*}
\dim \pi_{d}(\w_{n}(\bt) \cap \m) \geq\underset{1 \leq j \leq d}{\min} \left\{ \frac{n+1 + \sum_{i=j}^{n} (\tau_{j}- \tau_{i})}{1+ \tau_{j}}-m \right\}. \\ \label{dimension}
\end{equation*} 
Let $\bx \in B_{(\bp,q)}^{\ba}$ for some $(\bp,q) \in N(f, \tau)$. On using the triangle inequality, the mean-value theorem, and \eqref{d} we have that for any $1 \leq j \leq m$,
\begin{align*}
\left| f_{j}(\bx)-\frac{p_{d+j}}{q} \right| \leq & \left| f_{j}(\bx)-f_{j}\left(\frac{p_{1}}{q}, \dots, \frac{p_{d}}{q}\right) \right| + \left| f_{j}\left(\frac{p_{1}}{q}, \dots, \frac{p_{d}}{q}\right)-\frac{p_{d+j}}{q} \right|, \\
< & \left| \left(\frac{\partial f_{1}}{\partial x_{1}}, \dots, \frac{\partial f_{d}}{\partial x_{d}}\right). \left( \bx- \left(\frac{p_{1}}{q}, \dots, \frac{p_{d}}{q}\right) \right) \right| + \frac{q^{-1-\tau_{d+j}}}{2}, \\
< & D \sum_{i=1}^{d} \left| x_{i}-\frac{p_{i}}{q}\right| + \frac{q^{-1-\tau_{d+j}}}{2}, \\
< & Dd \underset{1 \leq i \leq d}{\max}\left| x_{i}-\frac{p_{i}}{q}\right| + \frac{q^{-1-\tau_{d+j}}}{2}, \\
< & Dd k^{a_{d}}q^{-1-\tau_{d}} + \frac{q^{-1-\tau_{d+j}}}{2}.
\end{align*}
 We can choose $k$ sufficiently small, and note that $\tau_{d} \geq \max_{1 \leq j \leq m} \tau_{d+j}$, so that we have 
\begin{equation*}
\left| f_{j}(\bx)-\frac{p_{d+j}}{q} \right| <  q^{-1-\tau_{j}}, \quad 1 \leq j \leq m.
\end{equation*}
We have that
\begin{equation*}
B_{(\bp,q)}^{\ba} \subseteq \left\{ \bx \in \U : \begin{array}{c}
 \left| x_{i}-\frac{p_{i}}{q} \right| < q^{-1-\tau_{i}}, \quad 1 \leq i \leq d, \quad \\
 \text{for i.m } \, (\bp, q) \in N(f, \tau) \subset \Z^{n} \times \N
 \end{array} \right\}.
\end{equation*}
Hence for any $\bx \in \underset{(\bp,q) \in N(f , \tau)}{\limsup} B_{(\bp,q)}^{\ba}$,  \eqref{inequal1}-\eqref{inequal2} are satisfied for infinitely many $(\bp,q) \in \mathbb{Z}^{n} \times \mathbb{N}$, thus
\begin{equation*}
\dim \pi_{d}(\s_{n}(\bt) \cap \m) \geq \dim \left(\underset{(\bp,q) \in N(f , \tau)}{\limsup} B_{(\bp,q)}^{\ba}\right),
\end{equation*}
as required.
\end{proof}

\section{Conclusion}

Using the arguments above and, principally applying the MTP of \cite{BV07}, we have established a lower bound for $\w_{n}(\bt) \cap \M$ which coincides with that of $\s_{n}(\psi) \cap \M$ from Theorem ~\ref{second}. The natural question is can equality be determined. That is, can an upper bound be found which agrees with our calculated lower bound? Thus achieving a complete analogue of Theorem~\ref{final}. In trying to attain an upper bound, it is likely necessary to find an estimate for the number of rational points within a $\bt$-neighbourhood of the manifold. There are a variety of results on the cardinality of rational points within a simultaneous $\psi$-neighbourhood of curves, manifolds, hypersurfaces, and affine subspaces (see \cite{H96}, \cite{BVVZ17}, \cite{H20}, \cite{HL18} respectively). Unfortunately, no such results have been found for the number of rational points within a weighted $\Psi$-neighbourhood of such subsets. It may be possible to adapt the proofs of the simultaneous results to give us weighted version of such results, but this is yet to be proven. We suspect such result would lead to suitable upper bound corresponding to Theorem ~\ref{final}. \par 
We remark that since the our proof of Theorem~\ref{final} there has been developments in Mass Transference style theorems. In particular, Wang and Wu \cite{WW19} have developed a Mass Transference Principle from rectangles to rectangles(MTPRR). One condition of using the MTPRR is that the $\limsup$ set being transformed is a ubiquitous system of rectangles (see Definition 2 of \cite{WW19} for more details). Unfortunately our Dirichlet-style theorem, Theorem~\ref{diristyle}, is not sufficient to prove our set of points are ubiquitous. We suspect that while using the MTPRR would improve the conditions on the $\bt$ approximations, we may need to apply extra conditions to our manifold in order to achieve ubiquity. We intend to pursue this in a further paper. \par 
Lastly, note that Corollary ~\ref{general} whilst being relatively general does not cover all approximation functions. We provide no results for functions with infinite upper order (see \eqref{limsup}), and also provide imprecise lower dimension results for approximation functions with different upper and lower orders. For example, an approximation function defined by a step function bounded between two functions $q^{- \tau_{1}}$ and $q^{-\tau_{2}}$ would have different upper and lower bounds. It would be of interest to extend the class of approximating functions somehow.

\bibliographystyle{plain}
\bibliography{bibref}
\end{document}